\DeclareMathAlphabet{\mathpzc}{OT1}{pzc}{m}{it}
\DeclareMathAlphabet{\mathpzc}{OT1}{pzc}{m}{it}
\newtheorem{thm}{Theorem}[section]
\newtheorem{lem}[thm]{Lemma}
\newtheorem{prop}[thm]{Proposition} 
\newtheorem{cor}[thm]{Corollary}
\newtheorem{rem}[thm]{Remark}
\newtheorem{ex}[thm]{Example}
\newcommand{\m}{\mathpzc{m}}
\newcommand{\p}{\mathpzc{p}}
\newcommand{\bZ}{\mathbb Z}
\newcommand{\bC}{\mathbb C}
\newcommand{\A}{\mathbb A}
\newcommand{\X}{X_1,\dots,X_m}
\newcommand{\mi}{1~\leqslant~i~\leqslant~m}
\newcommand{\Spec}{\operatorname{Spec}}
\newcommand{\td}{\operatorname{tr.deg}}
\newcommand{\dk}{\operatorname{DK}}
\newcommand{\ml}{\operatorname{ML}}
\title{ On the family of affine threefolds $a(x)y=F(x,z,t)$ }
\author{
	Parnashree Ghosh$^a$, Neena Gupta$^{b}$ and Ananya Pal$^{c}$\\
	{\small{\it $^a$ School of Mathematics,
			Tata Institute of Fundamental Research}}\\
	{\small{ \it Dr. Homi Bhabha Road,
			Mumbai-400005,India}}\\
	{\small{\it $^{b,c}$ Theoretical Statistics and Mathematics  Unit, Indian Statistical Institute,}}\\ 
	{\small{\it 203 B.T.Road, Kolkata-700108, India}}\\
	{\small{\it e-mail : $^a$ ghoshparnashree@gmail.com}}\\
	{\small{\it e-mail : $^{b}$ neenag@isical.ac.in, rnanina@gmail.com}}\\
	{\small {\it e-mail: $^{c}$ palananya1995@gmail.com }}
}
\begin{document}
	\date{}
	\maketitle
	\abstract{In recent decades, linear affine threefolds have enabled researchers to solve some of the challenging problems on affine spaces.
		Koras-Russell threefolds, especially the Russell Cubic over $\bC$ and Asanuma threefolds over a field of positive characteristic, are striking examples of such linear threefolds.
		In this paper, we apply tools from $K$-theory and theory of $\mathbb{G}_a$-actions to linear threefolds of the form $G:=a(X)Y-F(X,Z,T)\in k[X,Y,Z,T]$, over an arbitrary field $k$.
		
		We give some equivalent conditions for $G$ to be a hyperplane (i.e., $k[X,Y,Z,T]/(G)~=~k^{[3]}$) in the following cases: (i) $k$ is a field of characteristic zero (ii) $k$ is an arbitrary field and $a(X)$ has only multiple roots.
		We also establish the Abhyankar-Sathaye Conjecture affirmatively in these cases.
		
	}

	\smallskip
	
	\noindent
	{\small {{\bf Keywords}. Polynomial ring, Coordinate, Epimorphism Problem, Abhyankar-Sathaye Conjecture, Affine Fibration, Exponential Map, Derksen invariant, Makar-Limanov invariant, Zariski Cancellation Problem. }}
	\smallskip
	
	\noindent
	{\small {{\bf 2020 MSC}. Primary: 14R10; Secondary: 13B25, 13A50, 13A02.}}
	
	\section{Introduction}
	Throughout the paper, $k$ will denote a field of arbitrary characteristic and $\overline{k}$ will denote an algebraic closure of $k$. 
	All rings considered in this paper are commutative with unity. Capital letters like $X,Y,Z,T,U,V, X_1,\ldots,X_n$ etc., will denote indeterminates
	over the respective ground rings or fields. For a ring $R$,  we write $R^{[n]}$  to denote a polynomial ring in $n$ indeterminates over $R$.
	We shall call a polynomial $G \in k[X_1, \dots, X_n]$ linear if it is linear in one of the indeterminates.

	In the past few decades, solutions of some of the central problems in Affine Algebraic Geometry, like the Linearization Problem and the Zariski Cancellation Problem have involved 
	questions of the following type, for some specified linear polynomials $G~\in~k[X_1, ..., X_n]$:
	
	\medskip
	\noindent
	{\bf Question 1.}
	Is the linear polynomial $G$ under consideration a hyperplane (i.e., whether $\frac{k[X_1,\ldots,X_n]}{(G)}=k^{[n-1]}$) ?
	
	\medskip 
	\noindent

	The investigations also involved the problem of determining whether such a linear polynomial is a coordinate, if it is a hyperplane. This is a special case of one of the most challenging problems in Affine Algebraic Geometry, the {\it Epimorphism Problem} (\cite{kr},~\cite{DG}):
	
	\smallskip
	\noindent
	{\bf Question 2}: Let $m,n$ be two positive integers and $\phi: k[X_1,\ldots,X_n] \twoheadrightarrow k[Y_1,\ldots,Y_m]$, a $k$-algebra epimorphism.
	Does it follow that there exists a system of coordinates $\{F_1,\ldots,F_n\}$ of $k[X_1,\ldots,X_n]$ such that $\ker \phi=(F_1,\ldots,F_{n-m})$?
	
	\medskip
	In particular, when $n-m=1$, i.e., when ker$(\phi)$ is a principal ideal generated by some $G$, we have the following version of the Epimorphism Problem.
	
	\smallskip
	\noindent
	{\bf Question 3}: 	Let $k$ be a field and $n$ be a positive integer $\geqslant 2$.
	Let $G \in k[X_1,\ldots,X_n]$ be such that 
	$\frac{k[X_1,\ldots,X_n]}{(G)}=k^{[n-1]}$. Does it follow that $k[X_1,\ldots,X_n]=k[G]^{[n-1]}$?
	
	\medskip
	When $k$ is of positive characteristic, there are counterexamples to Question~$3$ given by B. Segre and M. Nagata (\cite{Se}, \cite{Na}).
	When $n=2$ and ch.$k=0$,
	Abhyankar-Moh (\cite{AM}) and Suzuki (\cite{Suz}) gave an affirmative answer to Question~$3$ ---  this is popularly known as the ``Epimorphism Theorem".
	When $n\geqslant 3$ and ch.$k=0$, the famous {\it Abhyankar-Sathaye Conjecture} asserts an affirmative answer to Question~$3$. 
	Note that when the characteristic of $k$ is positive, though Question~$3$ has a negative answer in general, the question may still be asked for specific forms of $G$. Indeed, many partial affirmative results for Question~$3$ have been proved even when $k$ is of arbitrary characteristic (see  \cite{rp}, \cite{Wright1}, \cite{DaDu1}, \cite{com}, \cite{adv2} etc.).
	A general survey of the Epimorphism Problem can be found in \cite{DG}.	
	
	For $n=3$, an affirmative solution to Question~$3$ was obtained when  $G~\in~k[X_1,X_2,X_3]$ is a linear plane, 
	first by A. Sathaye  (\cite{sp}) in characteristic zero and  later by P. Russell (\cite{rp}) in arbitrary characteristic.
	They also proved that if $B=k^{[2]}$ and the linear plane $G~\in~B[Y]~(=~k^{[3]})$ is of the form $aY+b$, where $a,b \in B$ and $a \neq 0$, then the coordinates $X, Z$ of
	$B$ can be chosen such that $B=k[X,Z]$ with $a \in k[X]$ and $k[X_1,X_2,X_3]=k[X,G]^{[1]}$; i.e., any  linear plane $G$ in $k^{[3]}$ was shown to be of the form $a(X)Y+b(X,Z)$ and $G$ forms a coordinate along with $X$.
	
	An affirmative answer to Question~$3$ would yield a possible generalisation of the Sathaye-Russell Theorem on linear planes.
	Motivated by the above result of Sathaye and Russell on linear planes, researchers started investigating the following question:
	
	\medskip
	\noindent
	{\bf Question 4}: Let 
	\begin{equation}\label{AI}
		G:=a(X)Y-F(X,Z,T) \text{ and }	A=	\frac{k[X,Y,Z,T]}{(a(X)Y - F(X,Z,T))}, 
	\end{equation}
	
	with $\deg_Xa(X)\geqslant1$,	be a domain. Then
	
	\begin{enumerate} 
		\item [\rm (i)] Under what condition $A=k^{[3]}$?
		\item[\rm(ii)] Does $A=k^{[3]} \implies  k[X,Y,Z,T]=k[G]^{[3]}?$
		\item[\rm(iii)]	If so, is $G$ necessarily a coordinate along with $X$ in $k[X,Y,Z,T]$?
	\end{enumerate}

	S. Kaliman, S. V\'en\'ereau, M. Zaidenberg (\cite{kvz}) and S. Maubach (\cite{CDer})  have answered Question~4, when $k=\bC$ and
	M. E. Kahoui, N. Essamaoui and M. Ouali extended their result over fields of characteristic zero (\cite{interpolation}).
	In \cite{com}, the second author had addressed Question~$4$, when $a(X)=X^d; d\geqslant 2$ and ch.$k\geqslant 0$, as a part of her investigations around the Zariski Cancellation Problem. 
	Further results on Question~$4$ have been obtained recently by the authors in \cite[Theorem 5.29]{XXX} in the following form:
	
	\medskip
	\noindent
	{\bf Theorem I.}
	Let $A$ and $G$ be as in \eqref{AI} and let $x$ be the image  of $X$ in $A$.
	Suppose $$
	a(X)=(X-\lambda)^{d}\alpha_1(X)\in \overline{k}[X],\, d\geqslant 2 ,\, \alpha_1(\lambda)\neq 0,
	$$
	where $\lambda$ is a separable element over $k$ and 
	$$
	F(X,Z,T)=f(Z,T)+\prod\limits_{i=1}^{n}p_i(X)h(X,Z,T),
	$$
	where  $p_1,\dots,p_n$ are the distinct prime factors of $a(X)$ in $k[X]$
	and $f\in k^{[2]}\setminus \{0\}, h\in~k^{[3]}$.
	Then the following statements are equivalent:
	\begin{enumerate}
		\item [\rm(i)] $k[X,Y,Z,T]=k[X,G]^{[2]}$.
		
		\item[\rm(ii)]  $k[X,Y,Z,T]=k[G]^{[3]}$.
		
		\item[\rm(iii)] $A=k[x]^{[2]}$.
		
		\item[\rm(iv)] $A=k^{[3]}$.
		
		\item[\rm(v)]  $k[Z,T]=k[f(Z,T)]^{[1]}$.
	\end{enumerate}
	
	\medskip
	\noindent
	%
	In this paper,	we shall prove the following generalisation of Theorem I (\thref{chp2}):
	
	\medskip
	\noindent
	{\bf Theorem A.}
	Let $A$ and $G$ be as in \eqref{AI} and let $x$ be the image  of $X$ in $A$. Suppose 
	$a(X)$ has no simple root in $\overline{k}$. 
	Then the following statements are equivalent:
	\begin{enumerate}
		\item [\rm(i)] $k[X,Y,Z,T]=k[X,G]^{[2]}$.
		
		\item[\rm(ii)]  $k[X,Y,Z,T]=k[G]^{[3]}$.
		
		\item[\rm(iii)] $A=k[x]^{[2]}$.
		
		\item[\rm(iv)] $A=k^{[3]}$.
		
		\item[\rm(v)] For every root $\lambda$ of $a(X)$, $k(\lambda)[Z,T]=k(\lambda)[F(\lambda,Z,T)]^{[1]}$.
	\end{enumerate}
	
	\medskip
	\noindent
	Theorem~A gives affirmative answer to the Epimorphism Problem (Question~3) for the respective family of hyperplanes in arbitrary characteristic.
	
	In \thref{chp2}, we actually prove an extended version of Theorem A with more equivalent statements, some involving the triviality of the {\it Makar-Limanov invariant}, some involving the triviality of the {\it Derksen invariant} of $A$, and some involving the condition that $A$ is an affine fibration over the subring $k[x]$.
	The theorem establishes a connection among the Dolgachev-Weisfeiler Affine Fibration Problem (about the triviality of $\mathbb{A}^n$-fibrations), the Zariski Cancellation Problem and the Epimorphism Problem. 
	Theorem~A also yields a family of counterexamples to the Zariski Cancellation Problem in positive characteristic (\thref{czcp}).
	In a forthcoming paper \cite{zcp}, further discussion on their isomorphic classes have been undertaken which will show that this family of counterexamples to the Zariski Cancellation Problem are distinct from the existing family of counterexamples given by the Asanuma threefolds in \cite{inv} and \cite{com}.
	The theory developed in Theorem A also allows us to understand the non-triviality of a huge class of threefolds. For example, the hypersurface given by
	$$ 
	G_1=X^2(X+1)^2Y - (Z^2+T^3)-X\in {k_1}[X,Y,Z,T],
	$$
	over an arbitrary field $k_1$ and the hypersurface  given by  
	$$
	G_2=(X^p-\lambda^p)Y -(Z^2+T^3)\in k_2[X,Y,Z,T],
	$$
	over a non-perfect field $k_2$ of characteristic $p>0$ for some $\lambda\in \overline{k_2}\setminus k_2$ (where $\overline{k_2}$ is an algebraic closure of $k_2$) and $\lambda^p\in k_2$,
	are not included in the family of hypersurfaces considered in Theorem I but included in Theorem~A  and it will follow that $k_i[X,Y,Z,T]/(G_i)\not\cong k_i^{[3]},\, i=1,2$.
	%
	%
	%
	
	We will also prove the following theorem (\thref{ch0}).

	\medskip
	\noindent
	{\bf Theorem B.}
	Let $A$ and $G$ be as in \eqref{AI} and let $x$ be the image  of $X$ in $A$. Suppose $k$ is a field of characteristic zero. 
	Then the statements (i)-(v) in Theorem~A are equivalent.

	\medskip
	\noindent
	
	Though Theorem~B has already been proved earlier in \cite{kvz}, \cite{CDer} and \cite{interpolation}, we give an alternative  purely algebraic approach without using any results from topology.
	Further, in \thref{ch0}, each of the five statements of Theorem~B has been shown to be equivalent to statements involving stably polynomial algebras and triviality of Makar-Limanov invariant/Derksen invariant.	
	
	\medskip
	
	Observe that from Theorems~A and
	B, Question~4(i) is addressed by the
	equivalence $\rm(iv)\Leftrightarrow\rm(v)$; 
	Question~4(ii) by the
	equivalence of $\rm(iv)\Leftrightarrow \rm(ii)$ and Question~4(iii) by $\rm(i)\Leftrightarrow\rm(ii)\Leftrightarrow \rm(iv)$.
	In particular, the  Abhyankar–Sathaye Conjecture holds affirmatively for the hypersurfaces $G$, considered in Theorems A and B.
	
	Below we give a layout of this paper.
	In Section \ref{pre}, we recall some basic concepts and well known results which will be used subsequently. In Section \ref{PropA}, we investigate a few properties of the ring $A$ as in \eqref{AI}, like factoriality etc.
	In the last section we prove Theorem~A (\thref{chp2}), Theorem~B (\thref{ch0})
	and present a few examples illustrating the various hypotheses of Theorem~A.
	
	\section{Preliminaries}\label{pre}
	
	Throughout the paper, $k$ will denote a field 
	and for a ring $R$, $R^{[n]}$ denotes a polynomial algebra in $n$ indeterminates over $R$.   
	If $R\subseteq B$ are domains then $\td_R(B)$ denotes the transcendence degree of fraction field of $B$ over the fraction field of $R$.
	For a ring $R$, a prime ideal $\p$ of $R$ and an $R$-algebra $B$, 
	$B_{\p}$ denotes the ring $S^{-1}B$, where $S:=R \setminus \p$,
	$k(\p)$ denotes the field $\frac{R_{\p}}{\p R_{\p}}\cong$ Frac$(\frac{R}{\p})$ 
	and $R^*$ denotes the group of all units of $R$.

	We first recall the concept of exponential maps on a $k$-algebra $B$.

	\medskip
	\noindent
	{\bf Definition.}
	Let $B$ be a $k$-algebra and $\phi: B \rightarrow B^{[1]}$ be a $k$-algebra homomorphism. For an indeterminate $U$ over $B$, 
	let $\phi_{U}$ denote the map $\phi: B \rightarrow B[U]$. Then $\phi$ is said to be an {\it exponential map} on $B$, if the following conditions are satisfied:
	
	\begin{itemize}
		\item [\rm (i)] $\epsilon_{0} \phi_{U} =id_{B}$, where $\epsilon_{0}: B[U] \rightarrow B$ is the evaluation map at $U=0$.
		
		\item [\rm (ii)]  $\phi_{V} \phi_{U}=\phi_{U+V}$, where $\phi_{V}:B \rightarrow B[V]$ is extended to a $k$-algebra homomorphism $\phi_{V}:B[U] \rightarrow B[U,V]$, by setting $\phi_{V}(U)=U$.
	\end{itemize}
	The ring of invariants of $\phi$ is a subring of $B$ defined as follows:
	$$
	B^{\phi}:=\left\{b \in B \, | \phi(b)=b  \right\}.
	$$
	If $B^{\phi}\neq B$, then $\phi$ is said to be {\it non-trivial}.
	Let EXP($B$) denote the set of all exponential maps on $B$. The {\it Makar-Limanov invariant} of $B$ is defined to be
	$$
	\ml(B):= \bigcap_{\phi \in \text{EXP}(B)} B^{\phi}
	$$
	and the {\it Derksen invariant} is a subring of $B$ defined as follows
	$$
	\dk(B):=k\left[ b \in B^{\phi} \,|\, \phi \in \text{EXP}(B)\, \text{and}\, \, B^{\phi} \subsetneq B   \right].
	$$
	
	We record below an useful lemma on exponential maps.
	\begin{lem}\thlabel{poly}
		Let $k$ be a field and $B=k^{[n]}$. Then $\dk(B)=B$ for $n\geqslant 2$ and $\ml(B)=k$.
	\end{lem}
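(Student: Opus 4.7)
The plan is to exhibit, for each variable $X_i$ in $B=k[X_1,\ldots,X_n]$, a concrete non-trivial exponential map that fixes all the other variables. Specifically, for each $i\in\{1,\ldots,n\}$, I would define $\phi_i\colon B\to B[U]$ on generators by $\phi_i(X_i)=X_i+U$ and $\phi_i(X_j)=X_j$ for $j\neq i$. Verifying the two exponential axioms is routine: the evaluation axiom $\epsilon_0(\phi_i)_U=\mathrm{id}_B$ is immediate, and the cocycle relation $(\phi_i)_V(\phi_i)_U=(\phi_i)_{U+V}$ reduces to the identity $(X_i+U)+V=X_i+(U+V)$ in $B[U,V]$. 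A direct comparison of coefficients in $U$ shows that $\phi_i(f)=f$ exactly when $f$ does not involve $X_i$, so $B^{\phi_i}=k[X_1,\ldots,\widehat{X_i},\ldots,X_n]$, and in particular $\phi_i$ is non-trivial whenever $n\geqslant 1$.

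With these maps available, the Derksen statement is immediate: when $n\geqslant 2$, for each $j$ there exists some $i\neq j$ such that $X_j\in B^{\phi_i}$, so every generator of $B$ lies in $\dk(B)$. Since $\dk(B)$ is a $k$-subalgebra of $B$ containing all generators, we conclude $\dk(B)=B$. For the Makar-Limanov statement, I would argue contrapositively: given any $f\in B\setminus k$, at least one variable $X_i$ appears non-trivially in $f$; then $\phi_i(f)=f(X_1,\ldots,X_i+U,\ldots,X_n)$ is a polynomial in $B[U]$ of positive $U$-degree, so $\phi_i(f)\neq f$ and $f\notin B^{\phi_i}$. Hence $f\notin\ml(B)$, giving $\ml(B)\subseteq k$; the reverse inclusion is automatic since every exponential map is a $k$-algebra homomorphism fixing $k$.

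There is essentially no obstacle here; the only mild verification is the cocycle identity for the translation maps, which is purely formal. A point worth noting is that the argument is insensitive to $\mathrm{ch}.\,k$, so no restriction on the field is required, which is appropriate given the role this lemma plays in later sections where $k$ may have arbitrary characteristic.
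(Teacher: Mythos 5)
Your argument is correct: the translation maps $\phi_i(X_i)=X_i+U$, $\phi_i(X_j)=X_j$ are exponential maps with $B^{\phi_i}=k[X_1,\ldots,\widehat{X_i},\ldots,X_n]$ (the $U$-degree of $\phi_i(f)$ equals $\deg_{X_i}f$, so this holds in arbitrary characteristic), and the conclusions $\dk(B)=B$ for $n\geqslant 2$ and $\ml(B)=k$ follow exactly as you say. The paper records this lemma without proof, citing standard references; your proof is the standard verification that those sources give, so there is nothing to add.
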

	
	Next we quote a technical result from \cite[Thoerems 5.22 and 5.23]{XXX}.
	
	\begin{prop}\thlabel{lin}
		Let $B$ be a $k$-domain defined as follows 
		$$
		B= \frac{k[X,Y,Z,T]}{(X^d\alpha_1(X)Y-F(X,Z,T))},	\text{~where~} d>1 \text{ and } \alpha_1(0)\neq 0. 
		$$
		Suppose that either $\ml(B)=k$ or $\dk(B)=B$.
		Then there exist $Z_1,T_1 \in k[Z,T]$ and $a_0,a_1\in k^{[1]}$ such that $k[Z,T]~=~k[Z_1,T_1]$ and $f(Z,T)~:=~F(0,Z,T)~=~a_0(Z_1)~+~a_1(Z_1)T_1.$
		
		Moreover, if $k[Z,T]/(f)=k^{[1]}$, then $k[Z,T]=k[f]^{[1]}$.
	\end{prop}

	Let $R$ be a Noetherian ring and let $\mathscr{M}(R)$ denote the category of finitely generated $R$-modules and $\mathscr{P}(R)$ the category of finitely generated projective $R$-modules. Let $G_{0}(R)$ and $G_{1}(R)$, respectively denote the {\it Grothendieck group} and the {\it Whitehead group} of the category  $\mathscr{M}(R)$. Let $K_{0}(R)$ and $K_{1}(R)$, respectively denote the {\it Grothendieck group} 
	and the {\it Whitehead group} of the category  $\mathscr{P}(R)$ (cf. \cite{bass}, \cite{bgl}). For $i \geqslant 2$, the definitions of $G_{i}(R)$ and $K_{i}(R)$ 
	can be found in (\cite{sr}, Chapters 4 and 5).
	Now, we quote two important results from \cite{sr}, Proposition 5.16 and Theorem 5.2 respectively.

	\begin{thm}\thlabel{fcom}
		Let $t$ be a regular element of $R$, $\phi: R \rightarrow C$ be a flat ring homomorphism of Noetherian rings and $u=\phi(t)$. Then we get the following commutative diagram:
		\begin{equation*}
			\begin{tikzcd}
				\cdots\arrow[r] &G_{i}(\frac{R}{tR}) \arrow[r]\arrow[d,"\overline{\phi}^{*}"] & G_{i}(R)\arrow[r]\arrow[d,"\phi^{*}"] & G_{i}(R[t^{-1}])\arrow{r}\arrow[d,"(t^{-1}\phi)^{*}"] & G_{i-1}(\frac{R}{tR}) \arrow{r}\arrow[d, "\overline{\phi}^{*}"] &\cdots  \\
				\cdots\arrow[r] &G_{i}(\frac{C}{uC}) \arrow[r] & G_{i}(C)\arrow[r] &G_{i}(C[u^{-1}]) \arrow{r}  & G_{i-1}(\frac{C}{uC}) \arrow{r} &\cdots ,
			\end{tikzcd}
		\end{equation*}
		where $\phi $ induces the vertical maps.
	\end{thm}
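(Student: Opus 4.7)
The statement is the standard naturality of the localization long exact sequence in algebraic $K$-theory, so the proof is essentially a bookkeeping of functoriality on top of Theorem~\thref{lexact}. My plan is to first check that both rows make sense and construct the vertical maps, then verify commutativity of the two squares not involving the connecting homomorphism (which is formal), and finally address the connecting-map square, which is the main obstacle.

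As a preliminary observation, since $t$ is $R$-regular and $C$ is $R$-flat, multiplication by $u=\phi(t)$ on $C$ remains injective, so $u$ is a regular element of $C$ and Theorem~\thref{lexact} applies to both rows. Because $\phi:R\to C$ is flat, the base-change functor $-\otimes_R C$ is exact and sends finitely generated $R$-modules to finitely generated $C$-modules; this induces the homomorphism $\phi^*:G_i(R)\to G_i(C)$. There are canonical isomorphisms $(R/tR)\otimes_R C\cong C/uC$ and $R[t^{-1}]\otimes_R C\cong C[u^{-1}]$, and the induced ring maps are again flat, supplying the vertical arrows $\overline{\phi}^*$ and $(t^{-1}\phi)^*$. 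The horizontal map $G_i(R/tR)\to G_i(R)$ is induced by restriction of scalars and the map $G_i(R)\to G_i(R[t^{-1}])$ is induced by the localization functor; both visibly commute with $-\otimes_R C$ at the level of exact categories, so the two squares not touching the connecting map commute on the nose.

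The main obstacle is naturality of the connecting homomorphism $\partial:G_i(R[t^{-1}])\to G_{i-1}(R/tR)$ with respect to flat base change. Rather than describe $\partial$ explicitly in every degree — which is feasible for small $i$ using projective resolutions of $t$-torsion modules but cumbersome in general — I would invoke the standard construction of the localization sequence as the homotopy long exact sequence of a fiber sequence of $K$-theory spectra $K(\mathscr{M}_t(R))\to K(\mathscr{M}(R))\to K(\mathscr{M}(R[t^{-1}]))$, where $\mathscr{M}_t(R)$ denotes the exact subcategory of finitely generated $t$-torsion $R$-modules. Flat base change along $\phi$ sends $\mathscr{M}_t(R)$ exactly into $\mathscr{M}_u(C)$ (because $u=\phi(t)$ and $-\otimes_R C$ is exact), hence induces a morphism of fiber sequences; applying $\pi_{*}$ yields the commuting ladder in every degree. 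This is precisely the form of the result cited from \cite[Theorem~5.2]{sr}.
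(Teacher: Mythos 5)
The paper does not actually prove this statement: Theorem \thref{fcom} is quoted as background from Srinivas's book, \cite[Theorem 5.2]{sr} (alongside Propositions 5.15 and 5.16 for \thref{lexact} and \thref{split}), so there is no internal proof to match your argument against. Your proposal is a correct reconstruction of the standard proof of that cited result, and the route you take is the usual one: $u=\phi(t)$ is $C$-regular by flatness, the exact base-change functor $-\otimes_R C$ induces the vertical maps, the two squares not involving $\partial$ commute because the relevant exact functors agree up to natural isomorphism, and naturality of $\partial$ comes from functoriality of Quillen's localization fibration. The one point your sketch slides over is that the fiber term in the localization sequence is $K(\mathscr{M}_t(R))$, the $K$-theory of the category of finitely generated $t$-torsion modules, and it is only identified with $G_\ast(R/tR)$ via d\'evissage; to get the ladder exactly as stated you also need this d\'evissage isomorphism to be compatible with base change, which holds because $-\otimes_R C$ carries $R/tR$-modules to $C/uC$-modules, but it deserves explicit mention. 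With that small addition your argument is complete; the difference from the paper is simply that the paper treats the statement as a black-box citation, while you supply the (standard) proof, which makes the presentation self-contained at the cost of invoking the spectrum-level machinery that the paper deliberately avoids.
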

	
	\begin{thm}\thlabel{split}
		For  an indeterminate $T$ over $R$, 
		the maps $G_{i}(R) \rightarrow G_{i}(R[T])$, induced by the inclusion $R \hookrightarrow R[T]$, are isomorphisms for every $i \geqslant 0$.
	\end{thm}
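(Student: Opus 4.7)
This is Quillen's homotopy invariance theorem for $G$-theory. I would prove it by following Quillen's original dévissage approach, carried out in detail in \cite[Chapter 5]{sr}.

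The first step is to verify that the map is well defined. Since $R[T]$ is a free, hence flat, $R$-module, the functor $M \mapsto M \otimes_R R[T]$ is an exact functor from $\mathscr{M}(R)$ to $\mathscr{M}(R[T])$. By the functoriality of Quillen's $Q$-construction it therefore induces the desired map $G_{i}(R) \to G_{i}(R[T])$ for every $i \geqslant 0$.

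Next, to show this map is an isomorphism, I would factor it through an intermediate abelian category $\mathscr{C}$ of finitely generated, bounded-below, $\bZ$-graded $R[T]$-modules, with $\deg T = 1$. Define $\alpha: \mathscr{M}(R) \to \mathscr{C}$ by sending $M$ to $M \otimes_R R[T]$ with its natural grading, and $\beta: \mathscr{C} \to \mathscr{M}(R[T])$ by forgetting the grading. Observe that $\beta \circ \alpha$ is the map of interest. I would then show that both $\alpha$ and $\beta$ induce isomorphisms on $K$-theory: the first by an additivity argument applied to the degree-shift autoequivalence of $\mathscr{C}$ that identifies the $K$-theory of $\mathscr{C}$ with $G_{*}(R)$, and the second by dévissage based on the fact that every finitely generated $R[T]$-module $N$ admits a good filtration $0 \subseteq N_{0} \subseteq N_{1} \subseteq \cdots$ with $T N_{i} \subseteq N_{i+1}$, each $N_{i}$ finitely generated as an $R$-module, and $\bigcup_{i} N_{i} = N$, obtained from any finite $R[T]$-generating set of $N$.

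The main technical obstacle will be the argument for $\beta$ at the higher $K$-theory level. A given $R[T]$-module has many good filtrations, so to relate its class in $G_{i}(R[T])$ to the associated graded class in the $K$-theory of $\mathscr{C}$ one must set up an auxiliary exact category of filtered modules and invoke Quillen's additivity theorem to show independence of choices. This functoriality concern is the technical heart of the proof, and one could alternatively bypass part of it by combining the localization sequence of \thref{lexact} applied to $R[T]$ with the regular element $T$ (noting $R[T]/(T) = R$) with a Bass-type contracted functor argument to push the result inductively from $G_{0}$ to all $G_{i}$.
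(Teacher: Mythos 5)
The paper offers no argument for this statement: it is quoted as a known result from \cite[Theorem 5.2]{sr}, i.e.\ it is Quillen's homotopy invariance theorem for $G$-theory, used as a black box. So the only question is whether your reconstruction of that proof is sound, and it is not. Your opening step is fine ($R[T]$ is flat over $R$, so $-\otimes_R R[T]$ is exact and induces maps $G_i(R)\to G_i(R[T])$), but the proposed factorization through the graded category $\mathscr{C}$ fails on both halves. First, $K_i(\mathscr{C})$ is not $G_i(R)$: using the characteristic sequence $0\to (R[T]\otimes_R M)(-1)\to R[T]\otimes_R M\to M\to 0$ together with the resolution and additivity theorems one finds $K_i(\mathscr{C})\cong \bigoplus_{n\in\bZ}G_i(R)$, a free $\bZ[\sigma,\sigma^{-1}]$-module on $G_i(R)$, where $\sigma$ is the degree shift; so $\alpha$ is not a $K$-isomorphism. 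Second, $\beta$ is not one either: already for $R$ a field, $G_0(\mathscr{C})\cong\bZ[\sigma,\sigma^{-1}]$ (the classes $[R[T](n)]$ are independent because the Hilbert series is additive on exact sequences), while $G_0(R[T])\cong\bZ$, and forgetting the grading collapses all shifts. Moreover d\'evissage cannot be invoked for $\beta$: Quillen's d\'evissage theorem applies to a full abelian subcategory closed under subobjects and quotients such that every object of the ambient category has a \emph{finite} filtration with subquotients in the subcategory; a forgetful functor from graded to ungraded modules is not an inclusion of that kind, and your exhaustive filtration $N_0\subseteq N_1\subseteq\cdots$ is infinite and its subquotients are not graded objects in the required sense.

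The genuine proof (Quillen's, which is what \cite[Theorem 5.2]{sr} records) keeps your graded computation but replaces the forgetful functor by a localization argument with a second graded variable: one computes, as above, the $K$-theory of finitely generated graded modules over $R[T]$ and over $R[T,U]$, identifies $\mathscr{M}(R[T'])$, $T'=T/U$, with the quotient of the graded $R[T,U]$-category by the Serre subcategory of $U$-torsion graded modules (here d\'evissage enters legitimately, for modules killed by a power of $U$), and then the localization sequence for abelian categories, with the incoming map identified as multiplication by $1-\sigma$, gives $G_i(R)\cong\operatorname{coker}(1-\sigma)\cong G_i(R[T'])$, compatibly with flat pullback. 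Your fallback is not viable either: \thref{lexact} applied to the regular element $T\in R[T]$ has third term $G_i(R[T,T^{-1}])$, which one only knows via the fundamental theorem, itself built on homotopy invariance, so the argument is circular; and Bass's contracted-functor formalism does not bootstrap homotopy invariance from $G_0$ to the higher $G_i$ --- it presupposes statements of this type rather than producing them. For the purposes of this paper, quoting \cite[Theorem 5.2]{sr}, as the authors do, is the appropriate treatment; a complete proof must follow the two-variable graded localization route just described.
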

	
	\begin{rem}\thlabel{rmk1}
		\rm{ 
			\begin{enumerate}[\rm(i)]
				\item 
				For a regular ring $R$, $G_{i}(R)=K_{i}(R)$, for every $i \geqslant 0$. 
				In particular, 
				\begin{enumerate}
					\item[\rm(a)] $G_0(k[\X])= G_0(k )= K_0(k)= \mathbb{Z}$
					\item[\rm(b)] $G_{1}(k[\X])=G_1(k)= K_1(k)=k^{*}$
				\end{enumerate}
				
				\item
				There is a canonical group homomorphism $\theta: R^{*}\hookrightarrow G_1(R)$ defined by $\theta(u)=[R,\sigma_{u}]$, for $u\in R^{*}$, where $\sigma_{u}:R\rightarrow R$ is the $R$-linear automorphism defined by $\sigma_{u}(r)=ru$ for all $r\in R$.			 
			\end{enumerate}
		}
	\end{rem}

	We now state the cancellative property of $k^{[1]}$ (\cite{AEH}, (2.8)).
	\begin{thm}\thlabel{aeh}
		Let $R$ be a $k$-domain such that $R^{[n]}=k^{[n+1]}$. Then $R = k^{[1]}$.
	\end{thm}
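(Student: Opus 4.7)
My plan is to first extract ring-theoretic and geometric consequences of the hypothesis $R^{[n]} = k^{[n+1]}$, and then identify $R$ as $k^{[1]}$ via a Picard-and-units analysis on $\mathbb P^1_k$.

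Additivity of Krull dimension and transcendence degree across $R^{[n]} = k^{[n+1]}$ gives $\dim R = \td_k R = 1$. The inclusion $R \hookrightarrow R[X_1,\ldots,X_n]$ is factorially closed by degree considerations, so $R$ is a factorially closed subring of the UFD $k^{[n+1]}$; hence $R$ is itself a UFD, and being a $1$-dimensional Noetherian UFD it is a Dedekind PID. Units descend as $R^{\ast} \subseteq (R^{[n]})^{\ast} = k^{\ast}$, so $R^{\ast} = k^{\ast}$. The composition $R \hookrightarrow R^{[n]} = k^{[n+1]} \twoheadrightarrow k$ (where the last map evaluates the polynomial generators at $0$) is a $k$-algebra map, giving a $k$-rational point of $\Spec R$. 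Since $k^{[n+1]}$ is geometrically integral, so is $R$.

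Next I would pass to geometry. Let $C := \Spec R$ with smooth projective completion $\overline{C}$. Over $\overline{k}$, the function field of $C$ embeds in $\overline{k}(Y_1, \ldots, Y_{n+1}) = \mathrm{Frac}(R^{[n]} \otimes_k \overline{k})$ as a subfield of transcendence degree one; by the L\"uroth-style theorem that every such subfield is purely transcendental over $\overline{k}$, $\mathrm{Frac}(R \otimes_k \overline{k}) = \overline{k}(u)$ for some $u$. Hence $C_{\overline{k}}$ is rational, so $\overline{C}$ is a $k$-form of $\mathbb P^1$; since $C$ has a $k$-rational point, so does $\overline C$, and Riemann-Roch applied to a degree-$1$ divisor on a smooth projective geometrically rational curve yields $\overline{C} \cong \mathbb P^1_k$.

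Thus $C = \mathbb P^1_k \setminus D$ for a non-empty finite set $D$ of closed points. The excision exact sequence
$$
0 \to k^{\ast} \to \mathcal O(C)^{\ast} \to \bigoplus_{P \in D}\bZ \xrightarrow{\,\deg\,} \operatorname{Pic}(\mathbb P^1_k) = \bZ \to \operatorname{Pic}(C) \to 0
$$
combined with $\operatorname{Pic}(C) = 0$ (from $R$ being a PID) and $\mathcal O(C)^{\ast} = k^{\ast}$ (from $R^{\ast} = k^{\ast}$) forces $|D| = 1$ and $\deg D = 1$: indeed, a single closed point of degree $d > 1$ would make $\operatorname{Pic}(C) = \bZ/d\bZ$ non-trivial, while $|D| \geqslant 2$ would produce non-trivial units from the kernel of $\deg$. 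Hence $D$ is a single $k$-rational point, $C \cong \A^1_k$, and $R = k^{[1]}$. The main obstacle I anticipate is the geometric rationality step, since it requires the L\"uroth-type theorem for transcendence-degree-one subfields of purely transcendental extensions of $\overline{k}$ in arbitrary transcendence degree; once that is in hand, the Picard/units computation closes the argument routinely.
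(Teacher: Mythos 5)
You should first note that the paper does not prove this statement at all: it is the classical Abhyankar--Eakin--Heinzer cancellation theorem, quoted from \cite{AEH}, whose original proof is purely algebraic (via factorially closed/inert subrings of polynomial rings). So your geometric route is genuinely different in flavour. The first half of your argument is sound: $R$ is a retract (hence a quotient) of $k^{[n+1]}$, so it is an affine $k$-domain with $\td_k R=1$; the degree argument shows $R$ is factorially closed in $k^{[n+1]}$, hence a UFD, hence a one-dimensional Noetherian UFD and therefore a PID with $R^{*}=k^{*}$; the retraction gives a $k$-rational point; and geometric integrality plus the L\"uroth-type theorem (unirational curves over $\overline{k}$ are rational, valid in all characteristics) gives that $\operatorname{Frac}(R\otimes_k\overline{k})=\overline{k}(u)$. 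The concluding excision computation on $\mathbb{P}^1_k$ is also correct as stated.

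The genuine gap is the sentence ``Let $C:=\Spec R$ with smooth projective completion $\overline{C}$ \dots\ so $\overline{C}$ is a $k$-form of $\mathbb{P}^1$.'' Over an imperfect field (and the paper needs the theorem for arbitrary $k$, since it is applied to residue fields $k(\lambda)$ in positive characteristic) a smooth projective completion need not exist: the regular projective model of a smooth, geometrically rational affine curve can have a regular but non-smooth boundary point, its arithmetic genus can be positive, and its base change to $\overline{k}$ is then a singular rational curve rather than $\mathbb{P}^1_{\overline{k}}$. For instance, the smooth curve $y^p=x+tx^p$ over $\mathbb{F}_p(t)$ is a nontrivial form of $\mathbb{A}^1$ with a rational point and only constant units, and its regular completion is not smooth; applied verbatim to such a curve your chain of deductions would break exactly at this step, and distinguishing it from $\mathbb{A}^1$ requires knowing its Picard group is nontrivial, which is a real theorem about forms of $\mathbb{A}^1$, not a routine technicality. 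What you do get cheaply is the geometric statement: the same factorial-closedness and unit argument applies to $R\otimes_k\overline{k}\subseteq\overline{k}^{[n+1]}$, so $R\otimes_k\overline{k}$ is a PID with units $\overline{k}^{*}$ and (by your excision argument over $\overline{k}$, or by \thref{alg}) $R\otimes_k\overline{k}\cong\overline{k}^{[1]}$; but you are then left with a descent problem for forms of $\mathbb{A}^1$, which is precisely the delicate point in characteristic $p$ and needs a further argument (e.g.\ Russell's analysis of such forms and their Picard groups, or AEH's original algebraic proof). Over perfect fields --- in particular in characteristic zero --- your proof is complete modulo the cited L\"uroth-type theorem.
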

	
	
	The following is a well-known characterisation of $k^{[1]}$ over an algebraically closed field~$k$.
	
	\begin{lem}{\thlabel{alg}}
		Let $k$ be an algebraically closed field and $B$ a finitely generated $k$-algebra. Suppose that $B$ is a PID and $B^{*}= k^{*}$. Then $B=k^{[1]}$.
	\end{lem}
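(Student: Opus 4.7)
The plan is to translate the algebraic hypotheses into geometric properties of the affine curve $C := \Spec B$, and conclude by a Picard-group computation on its smooth projective completion $\overline{C}$. Since $B$ is a PID we have $\dim B \leqslant 1$; the case $\dim B = 0$ reduces to $B = k$ by Zariski's Lemma (and is excluded by the conclusion $B = k^{[1]}$), so I assume $\dim B = 1$. Then $B$ is a regular, one-dimensional, finitely generated algebra over the perfect field $k$, so $C$ is a smooth irreducible affine $k$-curve. Let $\overline{C}$ be its unique smooth projective completion, let $\overline{C}\setminus C = \{P_1,\dots,P_s\}$ (with $s\geqslant 1$ since $C$ is affine), and let $g$ denote the genus of $\overline{C}$.

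The key tool is the standard divisor-class exact sequence
$$0 \to k^{*} \to B^{*} \to \bZ^{s} \to \operatorname{Pic}(\overline{C}) \to \operatorname{Pic}(C) \to 0,$$
where $B^{*}\to \bZ^{s}$ records the orders $(\operatorname{ord}_{P_i}(f))_i$ of a unit, and $\bZ^{s}\to \operatorname{Pic}(\overline{C})$ sends $(n_i)_i$ to the class of $\sum_i n_i P_i$. The hypothesis $B^{*}=k^{*}$ forces $B^{*}\to \bZ^{s}$ to be the zero map, giving an injection $\bZ^{s}\hookrightarrow \operatorname{Pic}(\overline{C})$; the hypothesis that $B$ is a PID makes $\operatorname{Pic}(C)=0$, so this injection is also surjective. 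Consequently $\operatorname{Pic}(\overline{C})\cong \bZ^{s}$ as abelian groups.

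Next, in the degree exact sequence
$$0 \to \operatorname{Jac}(\overline{C})(k) \to \operatorname{Pic}(\overline{C}) \xrightarrow{\deg} \bZ \to 0,$$
the subgroup $\operatorname{Jac}(\overline{C})(k)$ sits inside the free abelian group $\bZ^{s}$. Since $k$ is algebraically closed, the multiplication-by-$n$ isogeny on $\operatorname{Jac}(\overline{C})$ is surjective on $k$-points for every $n\geqslant 1$, so $\operatorname{Jac}(\overline{C})(k)$ is a divisible abelian group. Because the only divisible subgroup of a free abelian group is trivial, $\operatorname{Jac}(\overline{C})(k)=0$. As the Jacobian is a connected abelian variety of dimension $g$, this forces $g=0$ and $\overline{C}\cong \mathbb{P}^{1}_{k}$.

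It remains to pin down $s$. If instead $s\geqslant 2$, I would apply an automorphism of $\mathbb{P}^{1}_{k}$ sending two of the punctures $P_i$ to $0$ and $\infty$; the coordinate $t$ on $\mathbb{P}^{1}_{k}$ then restricts to a regular invertible function on $C$ lying outside $k^{*}$, contradicting $B^{*}=k^{*}$. Hence $s=1$, so $C\cong \A^{1}_{k}$ and $B = k^{[1]}$. The main obstacle is the genus-zero step: it rests on the algebro-geometric input that the $k$-points of an abelian variety over an algebraically closed field form a divisible group; once that is granted, the rest is a straightforward chase through the Picard exact sequence.
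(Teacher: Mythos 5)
Your argument is correct, but note that the paper itself contains no proof of this lemma: it is quoted as a known preliminary result, citing Lemma~2.9 of Freudenburg's book \cite{GFB}, so there is no in-paper argument to compare against. Your route --- pass to the smooth affine curve $C=\Spec B$, embed it in its smooth projective completion $\overline{C}$ with punctures $P_1,\dots,P_s$, use the exact sequence $0\to k^{*}\to B^{*}\to \bZ^{s}\to \operatorname{Pic}(\overline{C})\to \operatorname{Pic}(C)\to 0$, deduce $\operatorname{Pic}(\overline{C})\cong\bZ^{s}$ from $B^{*}=k^{*}$ and $\operatorname{Pic}(C)=0$, kill the Jacobian by divisibility inside a free abelian group to get genus $0$, and then force $s=1$ --- is a standard and complete proof of the intended statement. (In fact, once $g=0$ you could shortcut the last step: $\operatorname{Pic}(\mathbb{P}^{1}_{k})\cong\bZ$ together with the injection $\bZ^{s}\hookrightarrow\operatorname{Pic}(\overline{C})$ already gives $s=1$, though your explicit construction of a nonconstant unit when $s\geqslant 2$ is equally valid.) Every step you invoke (regularity of a PID, divisibility of $\operatorname{Jac}(\overline{C})(k)$ over an algebraically closed field, triviality of divisible subgroups of $\bZ^{s}$) is sound.

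One small point worth making explicit: as literally stated the lemma admits the degenerate case $B=k$ (a zero-dimensional PID with $B^{*}=k^{*}$), for which the conclusion fails; you correctly flag this and restrict to $\dim B=1$. This is consistent with the paper's use of the lemma, where it is applied to $\overline{k}[Z,T]/(f_i)$, a one-dimensional ring, so the intended hypothesis is that $B$ is an affine curve; it would be cleaner to say that the dimension-one hypothesis is implicit in the cited source rather than that the case is ``excluded by the conclusion.''
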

	
	We now state the well known Epimorphism Theorem due to Abhyankar-Moh (\cite{AM}) and Suzuki  (\cite{Suz}).
	
	\begin{thm}\thlabel{ams}
		Let $k$ be field of characteristic zero and $f \in k[Z,T]$. If  $\frac{k[Z,T]}{(f)}=k^{[1]}$, then $k[Z,T]=k[f]^{[1]}$.
	\end{thm}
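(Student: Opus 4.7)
The plan is to execute the Abhyankar--Moh strategy: convert the hypothesis $k[Z,T]/(f) = k^{[1]}$ into a statement about the degrees of a polynomial parametrization of the affine curve $V(f)\subset \A^2_k$, prove the central \emph{degree divisibility} lemma for such parametrizations, and then perform an inductive degree reduction via elementary triangular automorphisms of $k[Z,T]$. To set up, I would choose a $k$-isomorphism $k[Z,T]/(f) \xrightarrow{\sim} k[U]$ and let $z(U), t(U) \in k[U]$ be the images of $Z$ and $T$; then $f(z(U), t(U)) = 0$ and $k[z(U), t(U)] = k[U]$. Using that $k$ is infinite in characteristic zero, a preliminary $k$-linear change of variables allows us to assume $f$ is monic in $T$. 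The inductive quantity will be $\min(\deg_U z, \deg_U t)$, and the objective is to reduce it to $1$, at which point $f$ becomes (up to triangular automorphisms) linear in one of the variables and hence a coordinate of $k[Z,T]$.

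The heart of the proof is the following claim: if $z, t \in k[U] \setminus k$ satisfy $k[z, t] = k[U]$, then $\deg z \mid \deg t$ or $\deg t \mid \deg z$. I would establish this by analysing the Newton--Puiseux expansion of $T$ as a fractional power series in $Z^{-1}$ at the unique place at infinity of the projective closure of $V(f)$. Since $V(f) \cong \A^1_k$, this place is analytically smooth and the curve has a single place at infinity; the characteristic exponents of the Puiseux expansion are therefore severely constrained, and combined with the fact that $(z(U), t(U))$ is a \emph{polynomial} (not merely rational) parametrization, the Abhyankar--Moh semigroup theorem, or equivalently Abhyankar's theory of approximate roots, forces the claimed divisibility. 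Characteristic zero enters crucially: it guarantees that the Puiseux expansion exists without wild ramification, and in positive characteristic the theorem genuinely fails, as witnessed by the Segre--Nagata lines referenced in the introduction.

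Granted the lemma, the induction runs cleanly. Without loss of generality $m := \deg_U z \leqslant \deg_U t =: n$ and $m \mid n$; let $c_z, c_t \in k^{*}$ be the leading coefficients of $z, t$. The triangular $k$-automorphism $\sigma$ of $k[Z,T]$ fixing $Z$ and sending $T \mapsto T - (c_t/c_z^{n/m})\, Z^{n/m}$ replaces the parametrization by $\bigl(z(U),\, t(U) - (c_t/c_z^{n/m})\, z(U)^{n/m}\bigr)$, whose second coordinate has strictly smaller $U$-degree, while $\sigma(f)$ still defines a curve isomorphic to $\A^1_k$. After finitely many such steps we reach a parametrization in which one component, say $z$, has $U$-degree one; then the transformed polynomial $f'$ of $f$ is linear in one variable and is manifestly a coordinate of $k[Z,T]$. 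Composing the triangular automorphisms that were applied shows $f$ itself is a coordinate, i.e.\ $k[Z,T] = k[f]^{[1]}$. The main obstacle, by a wide margin, is the degree divisibility lemma: a self-contained proof demands developing either the Newton--Puiseux formalism with characteristic pairs or Abhyankar's theory of approximate roots, both of which go well beyond the preliminaries already set up in the paper.
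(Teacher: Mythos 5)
You should first note that the paper does not prove this statement at all: Theorem \thref{ams} is the classical Abhyankar--Moh--Suzuki Epimorphism Theorem, quoted as a known result with references to \cite{AM} and \cite{Suz}, so there is no internal argument to compare yours against; the assessment below is of your proposal on its own terms.

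Your scaffolding is the standard Abhyankar--Moh route and is essentially sound: the isomorphism $k[Z,T]/(f)\cong k[U]$ gives a polynomial parametrization $(z(U),t(U))$ with $k[z,t]=k[U]$ and $f(z,t)=0$; granted the divisibility $\deg z \mid \deg t$ or $\deg t \mid \deg z$, elementary triangular automorphisms strictly decrease $\deg z+\deg t$ until one component has degree one, at which point the kernel of $k[Z,T]\twoheadrightarrow k[U]$ is visibly generated by a polynomial of the form $T-t'(Z)$ (up to a unit), so the transformed $f$, and hence $f$, is a coordinate. (Two small imprecisions here are harmless and easily repaired: the quantity that decreases is $\deg z+\deg t$ or the larger degree, not $\min(\deg z,\deg t)$, and you must treat the case where a subtraction makes one component constant, which forces the other to have degree one.) The genuine gap is the central claim itself: the degree divisibility lemma \emph{is} the main theorem of Abhyankar--Moh's paper on embeddings of the line in the plane, and you justify it by appealing to ``the Abhyankar--Moh semigroup theorem, or equivalently Abhyankar's theory of approximate roots'' --- that is, you invoke the very result whose proof carries all the difficulty, rather than proving it. A complete argument would require developing the one-place-at-infinity analysis (Newton--Puiseux expansions with characteristic pairs, or approximate roots and the value semigroup) and showing how characteristic zero excludes the wild behaviour that produces the Segre--Nagata counterexamples; none of this is supplied, as you yourself acknowledge. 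So the proposal is an accurate road map of the classical proof, but as a proof it leaves its essential step unestablished --- which is consistent with the paper's own decision to cite the theorem rather than prove it.
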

	There exist examples constructed by Segre and Nagata which shows that the above theorem does not hold over a field of positive characteristic.

	We now state the celebrated Quillen-Suslin Theorem on projective modules (\cite{su2}, \cite{qu}).
	\begin{thm}\thlabel{qs}
		Every finitely generated projective module over $k^{[n]}$ is free. 
	\end{thm}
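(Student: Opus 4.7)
The plan is to follow Quillen's 1976 approach to Serre's conjecture, proceeding by induction on $n$. The base case $n=0$ is immediate since finitely generated modules over the field $k$ are free. For the inductive step, write $k^{[n]}=R[T]$ with $R=k^{[n-1]}$, and let $P$ be a finitely generated projective $R[T]$-module. The goal is to produce a finitely generated projective $R$-module $P_0$ with $P\cong R[T]\otimes_R P_0$; the inductive hypothesis then forces $P_0$ to be free, whence $P$ is free.

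The first main step is to establish \emph{Quillen's Patching Theorem}: a finitely presented $R[T]$-module $P$ is extended from $R$ if and only if for every maximal ideal $\m$ of $R$, the localization $P_\m$ is extended from $R_\m$. I would prove this by showing that the set $I:=\{f\in R\mid P_f\text{ is extended from }R_f\}$ is an ideal of $R$, using a faithfully flat descent argument to compare two possible $R_f$-module structures, and then arguing that the local hypothesis forces $I=R$.

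The second main step is \emph{Horrocks' Theorem}: if $R$ is local, $P$ is a finitely generated projective $R[T]$-module, and $P\otimes_{R[T]}R[T,T^{-1}]$ is free, then $P$ itself is free. I would prove this via stabilization and elementary matrix manipulations over $R[T,T^{-1}]$, exploiting the fact that over a local base one can clear denominators in a controlled way. Granting these two ingredients, for each maximal ideal $\m$ of $R$, the module $P_\m[T^{-1}]$ is a finitely generated projective module over the principal ideal domain $\operatorname{Frac}(R_\m/\m R_\m)[T,T^{-1}]$ after further reduction, and with some care one shows $P_\m[T^{-1}]$ is free over $R_\m[T,T^{-1}]$. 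Horrocks then yields that $P_\m$ is free, hence trivially extended from $R_\m$; Quillen patching produces the desired $P_0$, and induction closes the argument.

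The main obstacle is the proof of Horrocks' theorem, which is the historical technical heart of Serre's conjecture and requires delicate manipulations with unimodular rows and elementary transformations over a Laurent polynomial ring. An alternative route, due to Suslin, bypasses Quillen patching in favor of his theorem on unimodular rows: any unimodular row over $k^{[n]}$ whose first entry is monic in some variable can be completed to an invertible matrix. This avoids the patching formalism but replaces it with a substantial matrix-theoretic argument of its own. In either route, the essential combinatorial input lies well outside the techniques used elsewhere in the present paper, which is why the result is invoked here as a black box.
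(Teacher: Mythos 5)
The paper does not prove this statement at all: it is the classical Quillen--Suslin theorem, quoted as a black box with citations to Quillen and Suslin, so there is no in-paper argument to compare yours against. Your outline correctly reproduces the architecture of Quillen's published proof (induction on $n$, Quillen patching, Horrocks' theorem over a local base, with Suslin's unimodular-row completion as the alternative route), and your closing remark that the result is used here only as an external input matches exactly how the paper treats it.

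One caution if you ever write the sketch out in full: the step where you apply Horrocks is garbled. Horrocks' theorem (in the form needed) requires that $P_{\m}$ become free after inverting a \emph{monic} polynomial in $T$, or equivalently after localizing at the multiplicative set $S$ of all monic polynomials in $R_{\m}[T]$; inverting only $T$ and passing to $\operatorname{Frac}(R_{\m}/\m R_{\m})[T,T^{-1}]$ does not give you freeness over $R_{\m}[T,T^{-1}]$ in the way you assert, since reduction modulo $\m$ loses the information you need to descend. The standard argument instead shows $S^{-1}P$ is free (using the inductive hypothesis over the ring $S^{-1}R[T]$, via Quillen's analysis of $R\langle T\rangle$, or via Suslin's monic-polynomial normalization), then applies affine Horrocks locally and patches. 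As a summary of the known proof your proposal is sound; as a self-contained argument it would still owe full proofs of patching, Horrocks, and this localization step, which is precisely why the paper, reasonably, cites the literature instead.
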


	The following theorem on the structure of locally polynomial algebras was proved by Bass, Connell and Wright in \cite{bcw} and independently by Suslin in \cite{su}.
	
	\begin{thm}\thlabel{bcw}
		Let $R$ be a ring, $B$ a finitely presented $R$-algebra and $n\in \mathbb{Z}_{>0}$. Suppose that, for each maximal ideal $\m$ of $R$, the $R_{\m}$-algebra $B_{\m}$ is isomorphic to $R_{\m}^{[n]}$. Then $B \cong Sym_{R}(M)$ for some finitely generated projective $R$-module $M$ of rank $n$.
	\end{thm}

	We now state a version of the Russell-Sathaye criterion \cite[Theorem 2.3.1]{rs} for a ring to be a polynomial ring over a certain subring, as presented in \cite[Theorem 2.6]{BD}.
	
	\begin{thm}\thlabel{rs}
		Let $R \subseteq C$ be integral domains such that $C$ is a finitely generated $R$-algebra. 
		Let $S$ be a multiplicatively closed subset of $R\setminus\{0\}$ generated by some prime elements of $R$ 
		which remain prime in $C$. 
		
		Suppose $S^{-1}C=(S^{-1}R)^{[1]}$ and, for every prime element $p \in S$, we have $pR=pC \cap R$ and $\frac{R}{(p)}$ is algebraically closed in 
		$\frac{C}{(p)}$. 
		
		Then $C=R^{[1]}$.
	\end{thm}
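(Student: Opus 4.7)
The plan is to construct an element $\xi\in C$ with $C=R[\xi]$; transcendence of $\xi$ over $R$ will then follow from the chain $R[\xi]\hookrightarrow S^{-1}R[\xi]=(S^{-1}R)^{[1]}$, so the real content is the equality $R[\xi]=C$. Since $S^{-1}C=(S^{-1}R)^{[1]}$, I pick any polynomial generator of $S^{-1}C$ over $S^{-1}R$, write it as $\xi_{0}=c/s$ with $c\in C$ and $s\in S$, and observe that $\xi:=c$ also generates, since $s$ is a unit in $S^{-1}R$.

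The main ingredient is a descent step. Given $c\in C$ with $sc=f(\xi)$ for some $s\in S$ and $f\in R[T]$, and a prime factorisation $s=ps'$ with $p\in S$, reduce modulo $p$: the embedding $R/(p)\hookrightarrow C/(p)$ (guaranteed by $pC\cap R=pR$) lets me rewrite the identity as $\bar f(\bar\xi)=0$ in the integral domain $C/(p)$. If $\bar f=0$ in $(R/(p))[T]$, then $f=pg$, and cancelling $p$ in the domain $C$ yields $s'c=g(\xi)\in R[\xi]$, strictly shortening $s$. Otherwise $\bar f\neq 0$, so $\bar\xi$ is algebraic over $R/(p)$, and the hypothesis that $R/(p)$ is algebraically closed in $C/(p)$ upgrades this to $\bar\xi\in R/(p)$. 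Write $\xi=r+p\eta$ with $r\in R$ and $\eta\in C$; expanding $f(\xi)=f(r)+p\,h(\eta)$ for some $h\in R[T]$ and using $f(r)\in pC\cap R=pR$ gives $f(r)=pr'$ for some $r'\in R$, so that $s'c=r'+h(\eta)\in R[\eta]$. The element $\eta$ legitimately replaces $\xi$: $p\in(S^{-1}R)^{*}$ gives $S^{-1}R[\eta]=S^{-1}R[\xi]=S^{-1}C$, and the inclusion $R[\xi]\subsetneq R[\eta]$ is strict because a relation $\eta=h(\xi)$ with $h\in R[T]$ would force the nonzero polynomial $T-ph(T)-r$ to vanish at the transcendental element $\xi$.

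Iterating this descent on a single $c$ reduces $s$ to $1$ after at most $\mathrm{length}(s)$ steps, producing a refined generator $\xi^{(N)}$ with $c\in R[\xi^{(N)}]$. Using the finite generation $C=R[c_{1},\dots,c_{n}]$, I apply the descent successively to $c_{1},\dots,c_{n}$; each step only enlarges the polynomial subring $R[\xi^{(i)}]$, so the elements already absorbed stay in. Eventually $c_{1},\dots,c_{n}\in R[\xi^{(N)}]$ for a common $N$, yielding $C=R[\xi^{(N)}]\cong R^{[1]}$.

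The main obstacle I anticipate is the algebraic branch of the descent, which combines the two distinct hypotheses ($pR=pC\cap R$ and algebraic closedness of $R/(p)$ in $C/(p)$) in a non-obvious way: without the first, $f(r)$ need not be divisible by $p$ in $R$, and without the second, $\bar\xi$ need not even lift to an element of $R/(p)$. Verifying that the refinement $\xi\mapsto\eta=(\xi-r)/p$ is well-defined, preserves the localised generation property, and strictly enlarges the polynomial subring (so that the iteration is consistent and the finite-generation bookkeeping in the last step goes through) is the most delicate part of the argument.
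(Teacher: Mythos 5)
Your argument is correct. Note, though, that the paper offers no proof of this statement to compare against: it is quoted as a known result, the Russell--Sathaye criterion \cite[Theorem 2.3.1]{rs} in the form presented by Bhatwadekar--Dutta \cite[Theorem 2.6]{BD}. Your descent is in fact essentially the classical proof of that criterion: choose $\xi\in C$ with $S^{-1}R[\xi]=S^{-1}C$, and given a relation $sc=f(\xi)$ with $s=ps'$, either cancel $p$ when $\bar f=0$, or use the injectivity of $R/(p)\to C/(p)$ (coming from $pC\cap R=pR$), the primality of $p$ in $C$, and the algebraic closedness of $R/(p)$ in $C/(p)$ to write $\xi=r+p\eta$ and replace $R[\xi]$ by the larger subring $R[\eta]$; the expansion $f(r+p\eta)=f(r)+p\,h(\eta)$ with $h\in R[T]$ and the membership $f(r)\in pC\cap R=pR$ then remove one prime factor from the denominator, and induction on the length of $s$, together with your bookkeeping over the finitely many generators $c_1,\dots,c_n$ (which works because each refinement only enlarges the subring $R[\xi]$), finishes the proof. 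Two small remarks: the strictness of $R[\xi]\subsetneq R[\eta]$ is not actually needed, since your termination measure is the number of prime factors of $s$; and you should state explicitly that $C/(p)$ is a nonzero integral domain because $p$ remains prime in $C$ (this is exactly where that hypothesis enters), while the degenerate possibility that $\bar f$ is a nonzero constant is excluded by the injectivity of $R/(p)\to C/(p)$.
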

	
	Next we state an epimorphism result of Bhatwadekar and Dutta (\cite[Proposition 3.4 and Theorem 3.5]{BD}), a partial extension of the Sathaye-Russell Theorem over fields to the case of DVRs.
	\begin{thm}\thlabel{bd}
		Let $(R,\pi)$ be a discrete valuation ring, $\kappa:= \frac{R}{(\pi)}$ and $K:= R\left[\frac{1}{\pi}\right]$. Suppose $G \in R[Y,Z,T]$ is of the form $G=aY-b$, where $a \neq 0$ and $a,b \in R[Z,T]$. For any $g \in R[Y,Z,T]$, set $\overline{g}$ to be the image of $g$ in $\frac{R[Y,Z,T]}{(\pi)}$. 
		
		If $\frac{R[Y,Z,T]}{(G)}=R^{[2]}$,
		then there exists an element $Z_0 \in R[Z,T]$ such that $a \in R[Z_0]$, $\overline{Z_0} \notin \kappa$ and $K[Z,T]=K[Z_0]^{[1]}$. 
		
		Moreover, if $\dim (k[\overline{G},\overline{Z_0}])=2$, then $R[Y,Z,T]=R[G]^{[2]}$. In particular, if $a\notin~\pi R[Z,T]$, then $R[Y,Z,T] = R[G]^{[2]}$.
	\end{thm}
	
	The next result on triviality of separable $\A^{1}$-forms over $k^{[1]}$ is a special case of a theorem of Dutta \cite[Theorem 7]{dutta}.
	
	\begin{lem}\thlabel{sepco}
		Let $f \in k[Z,T]$ be such that $L[Z,T]=L[f]^{[1]}$, for some separable field extension $L$ of $k$. Then $k[Z,T]=k[f]^{[1]}$. 
	\end{lem}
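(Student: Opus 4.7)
The plan is Galois descent, after reducing to a finite Galois extension. Pick $g\in L[Z,T]$ with $L[Z,T]=L[f,g]$, and $\Phi,\Psi\in L[U,V]$ expressing $Z,T$ in terms of $f,g$. Let $L_1\subseteq L$ be the subfield generated over $k$ by the (finitely many) coefficients of $g,\Phi,\Psi$; then $L_1[Z,T]=L_1[f,g]$ and $L_1/k$ is finitely generated and separable (separability passes to subextensions since $L\otimes_k k^{1/p^{\infty}}$ remains reduced). Writing $L_1=k(t_1,\dots,t_r)(\alpha)$ with $\alpha$ separable algebraic over a separating transcendence basis, we eliminate the transcendentals if $r>0$ by clearing denominators so the identity $L_1[Z,T]=L_1[f,g]$ holds over a localization $R_s$ of a finitely generated $k$-subalgebra $R\subseteq L_1$, then specializing at a closed point of $\Spec R_s$ whose residue field is separable over $k$ (choosing the specialization within the étale locus over $\Spec k[t_1,\dots,t_r]$). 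After one more enlargement we may assume $L/k$ is finite Galois with group $G$.

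With this reduction, $G$ acts on $L[Z,T]$ with fixed ring $k[Z,T]$, so in particular $f$ is fixed. For each $\sigma\in G$, $\sigma(g)$ is again a second coordinate over $f$, hence $\sigma(g)=a_\sigma g+b_\sigma$ for unique $a_\sigma\in L[f]^{\times}=L^{\times}$ and $b_\sigma\in L[f]$. The assignment $\sigma\mapsto(a_\sigma,b_\sigma)$ is a $1$-cocycle in the semidirect product $L^{\times}\ltimes L[f]$, and trivializing it amounts to modifying $g$ to become Galois-invariant.

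Descent then follows from Hilbert~90 in two stages. Multiplicative Hilbert~90 gives $H^1(G,L^{\times})=0$, producing $c\in L^{\times}$ with $a_\sigma=\sigma(c)/c$; replacing $g$ by $g/c$ kills the multiplicative component. The residual additive cocycle $(b_\sigma)$ lies in $L[f]=L\otimes_k k[f]$, which is free as a $k[G]$-module by the normal basis theorem for $L/k$, so $H^1(G,L[f])=0$ and a further translation of $g$ by an element of $L[f]$ makes it Galois-invariant, hence in $k[Z,T]$. The identity $L\otimes_k k[f,g]=L[f,g]=L[Z,T]=L\otimes_k k[Z,T]$ then descends by faithful flatness to $k[f,g]=k[Z,T]$, giving the desired conclusion. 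The main obstacle is the specialization step in the first paragraph: in positive characteristic, one must ensure that the chosen closed point has a residue field genuinely separable over $k$, which uses the geometric reducedness of the generic fiber forced by separability of $L_1/k$. Once the finite Galois setup is in place, the descent step is essentially automatic from the vanishing of $H^1(G,L^{\times})$ and $H^1(G,L)$.
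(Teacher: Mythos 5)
Your argument is correct in substance, but it takes a genuinely different route from the paper: the paper offers no proof at all for this lemma, simply invoking it as a special case of Dutta's theorem on separable $\mathbb{A}^1$-forms (\cite[Theorem 7]{dutta}), viewing $k[Z,T]$ as an $\mathbb{A}^1$-form over $k[f]$ that becomes trivial after the separable base change $k\hookrightarrow L$. Your proof instead is a self-contained Galois descent: the key observation that any two complementary coordinates over $L[f]$ differ by an affine change $g\mapsto a_\sigma g+b_\sigma$ with $a_\sigma\in L^{\times}$, $b_\sigma\in L[f]$ (degree count in the polynomial ring $L[f][g]$), the two-stage trivialization via multiplicative Hilbert 90 for $(a_\sigma)$ and additive Hilbert 90 (normal basis) for $(b_\sigma)$ applied degreewise in $L[f]=\bigoplus_n Lf^n$, and the final faithfully flat descent of the inclusion $k[f,g'']\subseteq k[Z,T]$ are all sound. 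The only part needing care is the one you flag yourself: reducing an arbitrary separable extension to a finite Galois one, which requires passing to a finitely generated subextension, choosing a separating transcendence basis, clearing denominators, and specializing at a point of the \'etale locus — with the small case split that for infinite $k$ one takes a $k$-rational specialization of the transcendentals, while for finite $k$ separability of residue fields is automatic since $k$ is perfect — and finally passing to the Galois closure. What each approach buys: the paper's citation is shorter and rests on a more general result (valid for $\mathbb{A}^1$-forms over more general base rings than $k[f]$), whereas your argument is elementary, explicit (it produces the descended coordinate $g''$ concretely), and independent of Dutta's machinery, at the cost of the routine but fiddly reduction step.
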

	
	We now define an $\A^n$-fibration over a ring $R$.
	
	\medskip
	\noindent
	{\bf Definition.}
	Let $R$ be a ring. 
	A finitely generated flat $R$-algebra $B$ is said to be an {\it $\A^n$-fibration} over $R$ if $B \otimes_R k(\p) = k(\p)^{[n]}$ for every prime ideal $\p$ of $R$.
	
	\medskip
	
	The next result states that over a PID $R$ containing a field $k$ of arbitrary characteristic, an $\mathbb{A}^2$-form which is also an $\mathbb{A}^2$-fibration is trivial (\cite[Theorem 2.8]{dvr}).

	\begin{thm}\thlabel{kx2}
		Let $R$ be a PID containing a field $k$. If $B$ is an $\A^2$-fibration over $R$ such that $B\otimes_k \overline{k} = (R\otimes_k \overline{k})^{[2]}$, then $B=R^{[2]}$.
	\end{thm}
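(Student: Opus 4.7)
The plan is to establish local triviality of $B$ as an $R$-algebra at every maximal ideal of $R$, and then deduce global triviality from the freeness of finitely generated projective modules over the PID $R$.

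As a first step, I would localize at each maximal ideal $\m$ of $R$; the localization $B_{\m}$ remains an $\A^2$-fibration over the DVR $R_{\m}$, and the base change hypothesis persists as $B_{\m}\otimes_k \overline{k} = (R_{\m}\otimes_k \overline{k})^{[2]}$. If the local triviality $B_\m = R_\m^{[2]}$ can be established at every such $\m$, then since $B$ is a finitely generated, hence finitely presented, algebra over the Noetherian PID $R$, \thref{bcw} yields $B\cong \operatorname{Sym}_R(M)$ for some finitely generated projective $R$-module $M$ of rank $2$; since $R$ is a PID, $M$ is free and hence $B=R^{[2]}$.

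It then remains to treat the case where $R$ is itself a DVR. Here the trivialization $B\otimes_k \overline{k} = (R\otimes_k \overline{k})^{[2]}$ involves only finitely many elements of $\overline{k}$, so there exists a finite algebraic extension $k'/k$ inside $\overline{k}$ over which $B$ already splits, i.e.\ $B\otimes_k k' = (R\otimes_k k')^{[2]}$. The task is then to descend this splitting from $k'$ to $k$. I would handle the maximal separable subextension of $k'/k$ by Galois descent applied to a chosen pair of coordinates, combined if necessary with the DVR-analogue of the idea underlying \thref{sepco}; the purely inseparable part would be treated separately in characteristic $p>0$ via explicit analysis of $p$-th power relations, exploiting the DVR structure together with the fact that both the generic fiber $B\otimes_R K$ and the special fiber $B/\pi B$ are polynomial rings in two variables.

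The main obstacle I expect is the purely inseparable descent. It is precisely in positive characteristic that Asanuma-type examples exhibit $\A^2$-fibrations over a DVR that are not polynomial rings, and the hypothesis of splitting after base change to $\overline{k}$ is there exactly to exclude such phenomena. The bulk of the technical work will lie in converting coordinates over $R\otimes_k k'$ into coordinates over $R$, for which one typically combines the flatness of $B$, unique factorization in $R[Z,T]$, and a careful tracking of how the uniformizer $\pi$ interacts with the chosen generators, together with \thref{aeh} to pass from an $\A^2$-form-plus-variable to the desired polynomial structure.
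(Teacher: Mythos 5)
Your first step (localizing at each maximal ideal, then passing from local triviality to $B\cong \operatorname{Sym}_R(M)$ via \thref{bcw} and concluding $B=R^{[2]}$ from freeness of projective modules over a PID) is correct, and the base-change hypothesis does persist under localization as you claim. But be aware that the paper does not prove \thref{kx2} at all: it is quoted from the preprint \cite{dvr} (Theorem 2.8 there), precisely because the step you reduce to --- triviality over a DVR given a trivialization after base change to $\overline{k}$ --- is itself a substantial theorem occupying its own paper. Your proposal reduces to that case and then stops at a statement of intent, so what you have written proves only the routine globalization, not the theorem.

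Concretely, the gap is the descent over a DVR $(R,\pi)$ from $B\otimes_k k'=(R\otimes_k k')^{[2]}$ (for a finite extension $k'\supseteq k$) down to $B=R^{[2]}$. ``Galois descent applied to a chosen pair of coordinates'' is not an argument: the property of being a polynomial $R$-algebra does not descend formally along a finite separable extension of the coefficient field when the base is a DVR rather than a field, and a cocycle in $\Aut$ of the polynomial ring over $R\otimes_k k'$ need not be trivializable by any soft reasoning; moreover \thref{sepco} is a statement about planes over \emph{fields}, and its ``DVR-analogue'' is not available off the shelf --- producing it requires the kind of structural analysis of the generic and closed fibers (in the spirit of \thref{bd} and of Asanuma's structure theory) that constitutes the actual content of \cite{dvr}. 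The purely inseparable part, which you defer to ``explicit analysis of $p$-th power relations,'' is exactly where the difficulty is concentrated: as you yourself note, Asanuma-type $\A^2$-fibrations over a DVR are nontrivial in characteristic $p$, so the splitting hypothesis over $\overline{k}$ must be exploited in an essential and nontrivial way, and no mechanism for doing so is offered. Identifying where the obstacle lies is not the same as overcoming it; as it stands, the key step of the theorem remains unproven in your proposal.
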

	Note that the above result was proved earlier by A. K. Dutta (\cite[Remark 8]{dutta}) when $k$ is a field of characteristic zero without the assumption of $B$ being an $\mathbb{A}^2$-fibration over~$R$.

	We now recall a result proved by the second author from \cite[Proposition 3.6]{adv}. 
	
	\begin{prop}\thlabel{p1}
		Let $R$ be an integral domain, $\pi_1, \pi_2,\ldots, \pi_n \in R$ and $\pi=\pi_1 \pi_2\cdots \pi_n$. Let $G(Z,T) \in R[Z,T]$ be such that $R[Z,T]/(\pi, G(Z,T)) \cong_R (R/\pi)^{[1]}$. Let $r_1,\ldots, r_n$ be a set of positive integers and
		$$D := R[Z, T,Y ]/(\pi_1^{r_1}\cdots \pi_n^{r_n}Y-G(Z,T)).$$
		Then $D^{[1]}=R^{[3]}$.
	\end{prop}

	Finally, we quote an easy lemma.
	For a proof one can look at \cite[Lemma 5.28]{XXX}.
	\begin{lem}\thlabel{linear}
		Let $f= a_0(Z) + a_1(Z)T$ for some $a_0,a_1\in k^{[1]}$, be an irreducible polynomial of $k[Z,T]$ with $\left( \frac{k[Z,T]}{(f)} \right)^{*}=k^{*}$. Then $k[Z,T] = k[f]^{[1]}$. In particular, if $\frac{k[Z,T]}{(f)} = k^{[1]}$, then $k[Z,T] = k[f]^{[1]}$.	
	\end{lem}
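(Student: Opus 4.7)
The plan is to analyze the two cases $a_1=0$ and $a_1\neq 0$ separately, and in both cases to use the hypothesis on units of the quotient to force a structural conclusion on the coefficients.

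First I would dispose of the trivial case $a_1=0$. Then $f=a_0(Z)\in k[Z]$, and irreducibility of $f$ in $k[Z,T]$ forces $a_0$ to be irreducible in $k[Z]$. Setting $K:=k[Z]/(a_0)$, we get $k[Z,T]/(f)\cong K[T]$, whose unit group equals $K^*$. The hypothesis $(k[Z,T]/(f))^*=k^*$ then forces $K=k$, so $\deg_Z a_0=1$; hence $k[f]=k[Z]$ and trivially $k[Z,T]=k[f]^{[1]}$.

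Now suppose $a_1\neq 0$. First I would observe that irreducibility of $f$ forces $\gcd(a_0,a_1)=1$ in $k[Z]$: any common divisor $d(Z)$ would factor out of $f$ as $f = d(Z)\bigl(a_0/d + (a_1/d)T\bigr)$, with the second factor of $T$-degree one (hence non-unit) and $d\in k[Z]$, so $d$ must be a unit. The main step, and the crucial one, is to show that under these hypotheses $a_1\in k^*$. Write $A:=k[Z,T]/(f)$ and suppose for contradiction that $p(Z)\in k[Z]$ is an irreducible factor of $a_1$. Then
\[
A/pA \;\cong\; \frac{k[Z,T]}{(p(Z),\,a_0(Z)+a_1(Z)T)} \;\cong\; \frac{(k[Z]/(p))[T]}{(\bar a_0 + \bar a_1\, T)}.
\]
Since $p\mid a_1$ we have $\bar a_1=0$, and since $\gcd(a_0,a_1)=1$ and $p\mid a_1$ we have $p\nmid a_0$, so $\bar a_0$ is a nonzero element of the field $k[Z]/(p)$, hence a unit. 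Thus $A/pA=0$, i.e.\ $p$ is a unit in $A$. But $A^*=k^*$ then forces $p\in k^*$, contradicting irreducibility of $p$ in $k[Z]$. Therefore $a_1\in k^*$.

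Once $a_1\in k^*$, one has $T=a_1^{-1}(f-a_0(Z))\in k[Z,f]$, so $k[Z,T]=k[Z,f]=k[f][Z]=k[f]^{[1]}$, proving the first assertion. For the ``In particular'' clause, if $k[Z,T]/(f)=k^{[1]}$ then $(k[Z,T]/(f))^*=(k^{[1]})^*=k^*$ and $f$ is prime (hence irreducible) because the quotient is a domain, so the hypotheses of the main statement are met and the conclusion $k[Z,T]=k[f]^{[1]}$ follows. I anticipate the only non-routine ingredient is the reduction modulo $p(Z)$ that kills $a_1$; the rest is bookkeeping.
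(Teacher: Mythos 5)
Your proposal is correct and follows essentially the same route as the paper: the same split into the cases $a_1=0$ and $a_1\neq 0$, the same observation that irreducibility gives $\gcd(a_0,a_1)=1$, and the same use of the unit hypothesis to force $a_1\in k^*$ so that $f$ is linear in $T$. The only cosmetic difference is that the paper identifies the quotient explicitly as $k\left[Z,\frac{1}{a_1(Z)}\right]$ and reads off its units, whereas you reach the same conclusion by reducing modulo each irreducible factor $p(Z)$ of $a_1$ and showing $p$ becomes a unit in the quotient.
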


	\section{ Some properties of the ring $A$}\label{PropA}
	
	Throughout this section, $A$ will denote a ring of the following form
	\begin{equation}\label{A}
		A:=	\frac{k[X,Y,Z,T]}{(a(X)Y - F(X,Z,T))}, \text{~where~}\deg_Xa(X)\geqslant1
	\end{equation}
	and $G:= a(X)Y-F(X,Z,T)$. 
	Further, let $x,y,z,t$ denote the images  of $X,Y,Z,T$ in $A$ respectively. 
	Without loss of generality, we further assume that $\deg_X F < \deg_X a(X)$.
	
	Note that, $A$ is an integral domain if and only if $\gcd(a(X), F(X,Z,T))=1$ in $k[X,Z,T]$. We first note down an observation in the form of a lemma.
	
	\begin{lem}\thlabel{gcd} Let $a(X), F(X,Z,T)\in k[X,Z,T]$. Then $
		\gcd(a(X), F(X,Z,T)) = 1$ in $k[X,Z,T]$ if and only if $\gcd(a(X), F(X,Z,T)) = 1$ in $\overline{k}[X,Z,T]$.
	\end{lem} 
	\begin{proof}
		It is enough to show that if ${\rm gcd}(a(X), F(X,Z,T)) \neq 1$ in ${\overline{k}}[X,Z,T]$
		then \\${\rm gcd}(a(X), F(X,Z,T))\neq 1$ in $k[X,Z,T]$. 
		
		Let $\lambda$ be a root of $a(X)$ in $\overline{k}$ such that $F(\lambda, Z,T)=0$ and  
		$p(X)$ be a minimal polynomial of $\lambda$ over $k$.
		Let
		$$
		F(X,Z,T)= \sum_{(i, j) \in \Lambda} \alpha_{ij}(X) Z^iT^j,
		\text{ for some } \alpha_{ij}(X) \in k[X]\setminus \{0\} 
		$$ 
		and some finite subset $\Lambda$ of ${\bZ^2_{\geqslant 0}}$.
		Since $F(\lambda,Z,T)=0$, $\alpha_{ij}(\lambda)=0$, for every $(i,j) \in \Lambda$, and hence $p(X) \mid \alpha_{ij}(X)$ in $k[X]$ for every $(i,j)\in \Lambda$. 
		Therefore, $p(X) \mid F(X,Z,T)$ in $k[X,Z,T]$ and hence  
		${\rm gcd}_{k[X,Z,T]}(a(X), F(X,Z,T)) \neq 1$.
	\end{proof}
	
	As a consequence of \thref{gcd}, it follows that  
	\begin{lem}\thlabel{rnew}
		$A$ is an integral domain if and only if 	$ A \otimes_k \overline{k}$ is an integral domain.
	\end{lem}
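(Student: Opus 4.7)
The plan is to derive \thref{rnew} as a direct consequence of \thref{gcd}, reducing the assertion to the equivalence of gcd conditions already established there.

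First I would observe that, since the defining polynomial $G := a(X)Y - F(X,Z,T)$ has coefficients in $k$, the tensor product splits neatly as
\[
A\otimes_k\overline{k}\;\cong\;\frac{\overline{k}[X,Y,Z,T]}{(a(X)Y - F(X,Z,T))}.
\]
Thus $A\otimes_k\overline{k}$ is again the quotient of a polynomial ring by a principal ideal generated by a polynomial of exactly the same shape as $G$, but now considered over $\overline{k}$.

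Next I would invoke the standard criterion already recorded in the excerpt immediately before \thref{gcd}: the ring $A$ is an integral domain iff $G$ is irreducible in $k[X,Y,Z,T]$, equivalently iff $\gcd_{k[X,Z,T]}(a(X), F(X,Z,T)) = 1$. The point is that $G$ is linear in $Y$ with leading coefficient $a(X)\neq 0$, so viewed as an element of $k[X,Z,T][Y]$ its content is precisely this gcd; by Gauss's lemma $G$ is irreducible exactly when its content is a unit. The identical reasoning, applied instead over $\overline{k}$, shows that $A\otimes_k\overline{k}$ is a domain iff $\gcd_{\overline{k}[X,Z,T]}(a(X), F(X,Z,T)) = 1$.

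Combining these two criteria with \thref{gcd} yields the claimed equivalence. I do not anticipate any genuine obstacle: the substantive work has already been carried out in \thref{gcd} (tracing the vanishing of $F(\lambda,Z,T)$ back to divisibility of $F$ by the minimal polynomial of $\lambda$ over $k$), and \thref{rnew} is essentially a clean reformulation of that statement in terms of the integral-domain property of $A$ rather than the coprimality of $a(X)$ and $F(X,Z,T)$.
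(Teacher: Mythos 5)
Your proposal is correct and follows essentially the same route as the paper, which presents \thref{rnew} precisely as a consequence of \thref{gcd} via the criterion (stated just before \thref{gcd}) that $A$ is a domain if and only if $\gcd_{k[X,Z,T]}(a(X),F(X,Z,T))=1$, applied over both $k$ and $\overline{k}$. Your Gauss's lemma justification of that criterion simply spells out what the paper leaves implicit.
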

	
	For the rest of this section we shall assume that $A$ is an affine domain.
	We now state a criterion for a simple birational extension of a UFD to be a UFD (\cite[Proposition~3.5]{XXX}).
	
	\begin{prop}\thlabel{ufdg}
		Let $R$ be a UFD, $u,v\in R\setminus\{0\}$ and $C=\frac{R[Y]}{(uY-v)}$ be an integral domain.
		We consider $R$ as a subring of $C$.
		Let $u= \prod_{i=1}^{n}u_i^{r_i}$ be a prime factorization of $u$ in $R$. Suppose that for every $i \in \{ 1, \dots, n\}$ for which $(u_i,v)R$ is a proper ideal, we have $\prod_{j\neq i}^{}u_j^{s_j}\notin (u_i,v)R,$ for arbitrary integers $s_j\geqslant 0$. Then the following statements are equivalent:
		\begin{itemize}
			\item [\rm (i)] $C$ is a UFD.
			\item  [\rm (ii)] For each $i$, $1 \leqslant i \leqslant n$, either $u_i$ is a prime element in $C$ or $u_i \in C^*$.
			\item  [\rm (iii)] For each $i$, $1 \leqslant i \leqslant n$, either $(u_i,v)R$ is a prime ideal of $R$ or $(u_i,v)R = R$.
		\end{itemize}  
	\end{prop}
	We now prove some necessary and sufficient conditions for $A$ to be a UFD.
	
	\begin{prop}\thlabel{ufd}
		The following statements are equivalent:
		
		\begin{itemize}
			\item[\rm(i)] $A$ is a UFD.
			
			\item [\rm (ii)] Every prime factor $p(x)$ of $a(x)$ in $k[x]$ is either a prime element or a unit in $A$.
			
			\item[\rm (iii)] For every root $\lambda$ of $a(X)$ in $\overline{k}$, $F(\lambda,Z,T)$ is either irreducible or a unit in $k(\lambda)[Z,T]$.
		\end{itemize}
	\end{prop}
	\begin{proof}  
		Let $a(X) = \prod_{i=1}^{m}p_i(X)^{r_i}$ be a prime factorization of $a(X)$ in $k[X]$. Taking $R=k[X,Z,T]$, $u = a(X)$, $u_i=p_i(X),\, 1\leqslant i \leqslant m$ and $v=F(X,Z,T)$ in \thref{ufdg}, we have $A=\frac{R[Y]}{(uY-v)}$. 
		
		Fix an $i\in \{1,\dots,m\}$.
		Let $R_i:= \frac{R}{u_iR} = \frac{k[X,Z,T]}{(p_i(X))}$
		and $x_i$ be the image of $X$ in $R_i$.
		Since $\prod_{j \neq i} p_j(x_i)^{s_j}$ is a unit in $R_i$ for all $s_j\geqslant 0$,  $\prod_{j \neq i} p_j(x_i)^{s_j} \notin F(x_i,Z,T)R_i$ whenever $F(x_i,Z,T)R_i$ is a proper ideal.
		So, $\prod_{j \neq i} p_j(X)^{s_j} \notin (p_i(X), F(X,Z,T))k[X,Z,T]$, whenever $(u_i, v)R =(p_i(X), F(X,Z,T))k[X,Z,T]$ is a proper ideal. 
		Hence the result follows from \thref{ufdg}.
	\end{proof}
	Using the above proposition, we now prove the following result.
	\begin{cor}\thlabel{corline}
		Let $A$
		be a UFD such that for a root $\lambda$ of $a(X)$ and a minimal polynomial $p(X)$ of $\lambda$ in $k[X]$, $\left(\frac{A}{p(x)A}\right)^{*} =k(\lambda)^{*}$ and  $F(\lambda,Z,T)= a_0(Z)+ a_1(Z)T$ for some $a_0, a_1\in~k(\lambda)^{[1]}$. Then $k(\lambda)[Z,T] = k(\lambda)[F(\lambda,Z,T)]^{[1]}$.
	\end{cor}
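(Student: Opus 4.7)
The goal is to show that the polynomial $F(\lambda, Z, T)$, already known to be linear in $T$ after a suitable coordinate change (indeed, $F(\lambda,Z,T) = a_0(Z) + a_1(Z)T$), is in fact a coordinate of $k(\lambda)[Z,T]$. The plan is to reduce this to \thref{linear}, and the reduction will go through a computation of the quotient $A/p(x)A$.

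First, I would identify $A/p(x)A$ explicitly. Writing $a(X) = p(X)b(X)$ in $k[X]$, the relation $a(X)Y = F(X,Z,T)$ modulo $p(X)$ collapses to $F(X,Z,T) \equiv 0$. Using $k[X]/(p(X)) \cong k(\lambda)$, this yields
\[
\frac{A}{p(x)A} \;\cong\; \frac{k(\lambda)[Y,Z,T]}{(F(\lambda,Z,T))} \;\cong\; \left(\frac{k(\lambda)[Z,T]}{(F(\lambda,Z,T))}\right)[Y].
\]
Next I would apply \thref{ufd} to the UFD $A$: every root $\lambda$ of $a(X)$ gives that $F(\lambda,Z,T)$ is either irreducible or a unit in $k(\lambda)[Z,T]$. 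The unit case is ruled out, since then $A/p(x)A$ would be the zero ring, contradicting the hypothesis $(A/p(x)A)^* = k(\lambda)^*$ (which is nontrivial). Hence $F(\lambda,Z,T)$ is irreducible in $k(\lambda)[Z,T]$, so $B := k(\lambda)[Z,T]/(F(\lambda,Z,T))$ is an integral domain.

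Because $B$ is a domain, units in $B[Y]$ coincide with units in $B$, and therefore
\[
B^* = (B[Y])^* = \left(\frac{A}{p(x)A}\right)^* = k(\lambda)^*.
\]
At this point all the hypotheses of \thref{linear} are in force over the base field $k(\lambda)$: $F(\lambda,Z,T) = a_0(Z)+a_1(Z)T$ is an irreducible element of $k(\lambda)[Z,T]$ whose residue ring has trivial units. Applying \thref{linear} directly yields $k(\lambda)[Z,T] = k(\lambda)[F(\lambda,Z,T)]^{[1]}$, which is the desired conclusion.

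There is no real obstacle — the argument is short and each step uses a previously established result. The only care needed is to verify that the "unit" alternative from \thref{ufd} is incompatible with the assumption on $(A/p(x)A)^*$, and that passing units from $B[Y]$ back to $B$ is legitimate (which is immediate once irreducibility of $F(\lambda,Z,T)$, and hence domain-ness of $B$, is established).
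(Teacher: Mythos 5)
Your proposal is correct and follows essentially the same route as the paper's own proof: identify $\frac{A}{p(x)A}$ with $\left(\frac{k(\lambda)[Z,T]}{(F(\lambda,Z,T))}\right)^{[1]}$, use \thref{ufd} (with the unit alternative excluded by the hypothesis on units) to get irreducibility of $F(\lambda,Z,T)$, transfer the unit-group condition to $\frac{k(\lambda)[Z,T]}{(F(\lambda,Z,T))}$, and conclude by \thref{linear}. Your explicit justification that units of $B[Y]$ equal units of $B$ once $B$ is a domain is a detail the paper leaves implicit, but the argument is the same.
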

	\begin{proof}
		Note that $\frac{A}{p(x)A} \cong  \frac{k(\lambda)[Y,Z,T]}{(F(\lambda,Z,T))}=\left(\frac{k(\lambda)[Z,T]}{(F(\lambda,Z,T))}\right)^{[1]}$.
		Therefore, $\left(\frac{k(\lambda)[Z,T]}{(F(\lambda,Z,T))}\right)^{*}=\left(\frac{A}{p(x)A}\right)^{*}~=~k(\lambda)^{*}$ and since $A$ is a UFD, by \thref{ufd}, we know that $F(\lambda,Z,T)$ is irreducible in $k(\lambda)[Z,T]$.
		Hence, by \thref{linear}, $k(\lambda)[Z,T] = k(\lambda)[F(\lambda,Z,T)]^{[1]}$.
	\end{proof}
	
	We now give necessary and sufficient conditions on $A$ to be an affine fibration over~$k[x]$.

	\begin{prop}\thlabel{fib}
		The following  statements are equivalent:
		\begin{itemize}
			\item [\rm (i)] $A$ is an $\mathbb{A}^2$-fibration over $k[x]$.
			
			\item[\rm (ii)] For every prime factor $p(x)$ of $a(x)$ in $k[x]$, $\frac{A}{p(x)A} = \left( \frac{k[x]}{(p(x))} \right)^{[2]}$.
			
			\item[\rm (iii)] For every root $\lambda$ of $a(X)$ in $\overline{k}$, $F(\lambda, Z,T)$ is a line in $k(\lambda)[Z,T]$, i.e., $ \frac{k(\lambda)[Z,T]}{(F(\lambda,Z,T))}~=~k(\lambda)^{[1]}$.
		\end{itemize}
	\end{prop}
	
	\begin{proof}
		$\rm (i) \Rightarrow (ii):$
		Since $A$ is an $\mathbb{A}^{2}$-fibration over $k[x]$, we have
		$$
		A\otimes_{k[x]} \frac{k[x]_{\p}}{\p k[x]_{\p}} = \left(\frac{k[x]_{\p}}{\p k[x]_{\p}}
		\right)^{[2]},\text{ for every } \p \in \Spec(k[x]).$$
		Hence for an irreducible factor $p(x)$ of $a(x)$ in $k[x]$, we get $\frac{A}{p(x)A}=\left( \frac{k[x]}{(p(x))}\right)^{[2]}.$
		
		\smallskip
		\noindent
		$\rm (ii) \Rightarrow (iii):$
		Let $\lambda$ be a root of $a(X)$ in $\overline{k}$. Therefore, there exists a prime factor $p(X)$ of $a(X)$ in $k[X]$, such that $p(\lambda)=0$. Now $ \frac{k[x]}{(p(x))}\cong k(\lambda)$ and hence, we have
		\begin{equation}\label{ap1}
			\frac{A}{p(x)A}	\cong \left(\frac{k(\lambda)[Z,T]}{(F(\lambda,Z,T))}\right)^{[1]}.		
		\end{equation}
		By $\rm(ii)$, 
		\begin{equation}\label{ap2}
			\frac{A}{p(x)A} = \left( \frac{k[x]}{(p(x))} \right)^{[2]}= k(\lambda)^{[2]}.
		\end{equation}
		Therefore, by (\ref{ap1}), (\ref{ap2}) and \thref{aeh}, we have 
		$\frac{k(\lambda)[Z,T]}{(F(\lambda,Z,T))}= k(\lambda)^{[1]}$.
		
		\smallskip
		\noindent
		$\rm (iii) \Rightarrow (i):$
		Since $A$ is an integral domain, it is a torsion free $k[x]$-module. As $k[x]$ is a PID, it follows that $A$ is a flat $k[x]$-algebra.
		Let $\p:=(p(x))$ be a prime ideal of $k[x]$. 
		It is enough to show that $A \otimes_{k[x]} k(\p)= k(\p)^{[2]}$.
		We now consider two cases and show that in each case $A$ is an $\mathbb{A}^2$-fibration over $k[x]$.
		
		\smallskip
		\noindent
		{\it Case} 1:
		Suppose $p(x) \nmid a(x)$.
		Then $a(x)$ becomes a unit in $A_{(p(x))}$ and hence
		$A_{(p(x))}=k[x]_{(p(x))}[z,t]=k[x]_{(p(x))}^{[2]}$, and therefore 
		\begin{equation*}\label{lf1}
			\frac{A_{(p(x))}}{p(x) A_{(p(x))}}= A \otimes_{k[x]} \frac{k[x]_{(p(x))}}{p(x) k[x]_{(p(x))}}= \left(\frac{k[x]_{(p(x))}}{p(x) k[x]_{(p(x))}}\right)^{[2]}.
		\end{equation*}
		
		\smallskip  
		\noindent
		{\it Case} 2:
		Suppose $p(x) \mid a(x)$.
		Let $\lambda$ be a root of $p(X)$ in $\overline{k}$.
		Since $F(\lambda,Z,T)$ is a line in $k(\lambda)[Z,T]$, we have
		$$
		\frac{A}{p(x)A} 
		\cong \frac{(k[X]/(p(X)))[Y,Z,T]}{(F(X,Z,T))} \cong  \frac{k(\lambda)[Y,Z,T]}{(F(\lambda,Z,T))}=k(\lambda)^{[2]}.
		$$
		Hence, $A \otimes_{k[x]} k(\p)= k(\p)^{[2]}$, where  $k(\p)= \frac{k[x]_{(p(x))}}{p(x) k[x]_{(p(x))}} \cong k(\lambda)$.
	\end{proof}
	Next, we give a necessary and sufficient condition on $A$ to be a regular domain.
	For any $g\in k[X_1,\dots,X_n]$, $g_{X_i}$ denotes $\frac{\partial g}{\partial X_i}$.
	
	\begin{lem}\thlabel{reg}
		Let $k$ be a perfect field. Then $A$ is a regular domain, if and  only if the following conditions are satisfied:
		\begin{enumerate}
			\item [\rm(i)] For every simple root $\lambda$ of $a(X)$ in $\overline{k}$, $k(\lambda)[Z,T]/F(\lambda, Z,T)$ is a regular ring.
			\item [\rm(ii)] For every multiple root $\lambda$ of $a(X)$ in $\overline{k}$, 
			$$
			(F(\lambda,Z,T), F_X(\lambda,Z,T), F_Z(\lambda,Z,T),F_T(\lambda,Z,T))k(\lambda)[Z,T]=k(\lambda)[Z,T].
			$$
		\end{enumerate}
	\end{lem}
	\begin{proof}
		Since $k$ is a perfect field, by the Jacobian criterion \cite[Theorem 30.5]{matr}, $A$ is a regular domain if and only if 	
		\begin{equation*}
			(a(X)Y-F(X,Z,T), a_X(X)Y-F_X(X,Z,T), a(X), F_Z(X,Z,T),F_T(X,Z,T))=k[X,Y,Z,T].
		\end{equation*}
		Thus $A$ is a regular domain if and only if for any root $\lambda$ of $a(X)$, we have		\begin{equation}\label{j}
			( F(\lambda,Z,T), a_X(\lambda) Y-F_X(\lambda,Z,T),F_Z(\lambda,Z,T),F_T(\lambda,Z,T) )=k(\lambda)[Y,Z,T].	
		\end{equation}				
		If $\lambda$ is a simple root of $a(X)$ then $a_X(\lambda)\neq 0$, and hence \eqref{j} is equivalent to
		$$
		\left(  F(\lambda,Z,T), F_Z(\lambda,Z,T),F_T(\lambda,Z,T)\right)=k(\lambda)[Z,T],
		$$
		i.e. $k(\lambda)[Z,T]/F(\lambda, Z,T)$ is a regular ring.
		Therefore, (i) holds.
		
		Now, if $\lambda$ is not a simple root then $a_X(\lambda) = 0$, and hence \eqref{j} is equivalent to 
		$$
		\left(  F(\lambda,Z,T), F_X(\lambda,Z,T), F_Z(\lambda,Z,T),F_T(\lambda,Z,T)\right)=k(\lambda)[Z,T].
		$$
		Therefore, (ii) holds and the result follows.		
	\end{proof}
	
	\section{Main Theorems}\label{THA}
	
	In this section, we prove extended versions of Theorems~A and B over an affine domain $A$ as in \eqref{A}.
	We begin by proving some results which are crucial steps to the theorems.
	
	The following result gives certain necessary conditions on $A$, for $A$ to be stably isomorphic to a polynomial ring over $k$.
	
	\begin{thm}\thlabel{line}
		Let $A$ be an affine domain as in \eqref{A} such that $A^{[l]}=k^{[l+3]}$ for some $l \geqslant 0$.
		Suppose $a(X)= \prod_{1 \leqslant i \leqslant n} (X-\lambda_i)^{\mu_i}$ in $\overline{k}[X]$ with $\mu_i \geqslant 1$ and $f_i:=F(\lambda_i, Z,T)$, for $1\leqslant i\leqslant n$.
		Then the following statements hold: 
		\begin{enumerate}	
			\item[\rm(i)]	For each $i$, $1\leqslant i\leqslant n$,  
			$
			G_{j}(k(\lambda_i)) \cong G_j\left( \frac{k(\lambda_i)[Z,T]}{(f_i)}\right),
			$
			for $j=0,1$, and hence $\left( \frac{k(\lambda_i)[Z,T]}{(f_i)}\right)^{*}= k(\lambda_i)^{*}$.
			\item[\rm(ii)]  If $\mu_i=1$ for some $i$, then $\frac{\overline{k}[Z,T]}{(f_i)}=\overline{k}^{[1]}$.	
			\item[\rm(iii)] If $\mu_i>1$ for some $i$ and if either  $\ml(A)~=~k$ or $\dk(A)~=~A$, then  $k(\lambda_i)[Z,T]=~~k(\lambda_i)[f_i]^{[1]}$.
		\end{enumerate}			
	\end{thm}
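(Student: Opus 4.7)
The plan is to prove (i) via the $K$-theoretic tools of Section \ref{main}'s preliminaries, and then use (i) as the key input for (ii) and (iii).

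For (i), I would begin by verifying that $A$ is regular: the indeterminates $U_1,\ldots,U_l$ of the polynomial extension $A^{[l]} = A[U_1,\ldots,U_l] = k^{[l+3]}$ generate an ideal of height $l$ at every maximal ideal of $A^{[l]}$ lying over a given maximal ideal $\mathfrak{m}$ of $A$ (using $\dim A = 3$ for the hypersurface $A$), so they extend to a regular system of parameters in the regular local ring $(A^{[l]})_{\mathfrak M}$, and quotienting yields the regular local ring $A_{\mathfrak m}$. Hence $G_j(A) = K_j(A)$, and iterated homotopy invariance (\thref{split}) identifies $K_j(A) = K_j(A^{[l]}) = K_j(k)$; the same identification persists after inverting $p(x)$ because $A^{[l]}[1/p(x)] = A[1/p(x)]^{[l]}$. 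Next I would establish that $A$ is a UFD: $A$ is regular, and $\mathrm{Pic}(A) \hookrightarrow \mathrm{Pic}(A^{[l]}) = 0$ via the retraction setting all $U_i = 0$. By \thref{ufd} every prime factor of $a(x)$ in $k[x]$ remains prime in $A$ (the unit alternative for $p(x)$ would force $A/p(x)A = 0$, contradicting $f_i \neq 0$), which lets me identify $A[1/p(x)]^* = k^* \oplus \langle p(x) \rangle$, matching $k[x,1/p(x)]^*$. Apply \thref{fcom} to the flat inclusion $k[x] \hookrightarrow A$ at $t = p(x)$: the four vertical maps $G_j(k[x]) \to G_j(A)$ and $G_j(k[x,1/p(x)]) \to G_j(A[1/p(x)])$ for $j = 0,1$ are all isomorphisms (the first pair by homotopy invariance, the second pair by the unit calculation together with triviality of Pic for both regular domains). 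The five-lemma applied to the long exact sequences of \thref{lexact} then delivers $G_j(k(\lambda_i)) \cong G_j(A/p(x)A)$ for $j=0,1$, and one more invocation of \thref{split} on $A/p(x)A = (k(\lambda_i)[Z,T]/(f_i))[Y]$ identifies the right-hand side with $G_j(k(\lambda_i)[Z,T]/(f_i))$. The unit-group statement is the $j = 1$ reading through the embedding $R^* \hookrightarrow K_1(R)$ against $K_1(k(\lambda_i)) = k(\lambda_i)^*$.

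For (ii), when $\mu_i = 1$, base-change to $\overline{k}$: \thref{rnew} preserves the domain property, $A \otimes_k \overline{k}$ is stably polynomial over $\overline{k}$, and the minimal polynomial of $\lambda_i$ over $\overline{k}$ is $X - \lambda_i$. Invoking (i) over $\overline{k}$ yields $G_0(\overline{k}[Z,T]/(f_i)) = \mathbb{Z}$ (so the class group vanishes) and $(\overline{k}[Z,T]/(f_i))^* = \overline{k}^*$; regularity of $\overline{k}[Z,T]/(f_i)$ comes from the simple-root criterion \thref{reg}(i), upgrading it to a Dedekind PID over the algebraically closed field $\overline{k}$. Irreducibility of $f_i$ in $\overline{k}[Z,T]$ (needed so the quotient is a domain) follows from \thref{ufd} applied to the UFD $A \otimes \overline{k}$, with the unit alternative excluded as in (i). \thref{alg} then concludes $\overline{k}[Z,T]/(f_i) = \overline{k}^{[1]}$.

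For (iii), $A$ is a UFD (established during (i)), so \thref{ufd} forces $f_i$ to be either irreducible or a unit in $k(\lambda_i)[Z,T]$; the unit case is again excluded. Base-change $A$ to $k' := k(\lambda_i)$ and translate $X \mapsto X + \lambda_i$; the resulting ring $A''$ has the form $\frac{k'[X,Y,Z,T]}{(X^{\mu_i}\alpha'(X) Y - F(X + \lambda_i, Z, T))}$ with $\alpha'(0) \neq 0$ and $\mu_i > 1$, exactly the setting of \thref{lin} and \thref{lin2}. Exponential maps on $A$ lift under base change by the free module $k'$, and the Makar-Limanov/Derksen invariants commute with this tensor product, so $\ml(A) = k$ yields $\ml(A'') = k'$ while $\dk(A) = A$ yields $\dk(A'') = A''$ (which in particular contains the element $y$ of positive $w$-weight). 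The appropriate proposition then produces $f_i = a_0(Z_1) + a_1(Z_1) T_1$ in some coordinate system $Z_1, T_1$ of $k'[Z,T]$. Finally, combining the irreducibility of $f_i$ with the unit-group computation from (i), \thref{linear} concludes $k(\lambda_i)[Z,T] = k(\lambda_i)[f_i]^{[1]}$.

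The main obstacle is executing (i) cleanly: the $K$-theoretic comparison requires securing four vertical isomorphisms in the five-lemma, which in turn demands an explicit computation of $A[1/p(x)]^*$ via UFD-ness of $A$ and a careful use of regularity at every intermediate stage so that homotopy invariance can be applied uniformly. Once this technical infrastructure is in place, (ii) and (iii) are comparatively direct reductions — the former to the algebraically closed case via \thref{alg}, the latter to a coordinate analysis via \thref{lin}, \thref{lin2}, and \thref{linear}.
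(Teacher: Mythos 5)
Your parts (ii) and (iii) essentially reproduce the paper's argument (the paper packages your last step of (iii) as \thref{corline}, i.e.\ \thref{linear} combined with the UFD and unit-group hypotheses), and your overall plan for (i) --- compare the localization sequences of $k[x]$ and $A$ via \thref{fcom} and the five lemma --- is also the paper's. The regularity detour is unnecessary (the groups $G_j$ satisfy homotopy invariance, \thref{split}, without any regularity hypothesis), but that is harmless.

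The genuine gap is in the localized column of (i). You invert a single prime factor $p(x)$ of $a(X)$ and claim that the maps $G_j(k[x,1/p(x)])\to G_j(A[1/p(x)])$, $j=0,1$, are isomorphisms ``by the unit calculation together with triviality of Pic''. This does not work: when $a(X)$ has other prime factors, $A[1/p(x)]$ is still a nontrivial hypersurface, not $k[x,1/p(x)]^{[2]}$, and for rings of dimension $\geqslant 2$ the groups $K_0$ and $K_1$ are not determined by $\mathrm{Pic}$ and units (there are $SK_0$ and $SK_1$ obstructions); moreover, even an abstract computation of both sides would not show that the \emph{natural} map is an isomorphism. Worse, even granting isomorphisms at $j=0,1$, the five lemma applied to the segment
\[
G_2(R)\to G_2(R[t^{-1}])\to G_1(R/tR)\to G_1(R)\to G_1(R[t^{-1}])
\]
needs control at the $G_2$ level to extract $G_1$ of the fiber, and $G_1$ of the fiber is exactly what the unit-group statement in (i) requires; your proposal only addresses $j=0,1$. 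The paper circumvents all of this by inverting the product $u(x)$ of \emph{all} distinct prime factors of $a(X)$ at once: then $A[u(x)^{-1}]=k[x,u(x)^{-1},z,t]=k[x,u(x)^{-1}]^{[2]}$, so \thref{split} gives the vertical isomorphisms at \emph{every} level $j\geqslant 0$ with no unit or Picard-group computation, the five lemma yields $G_j(k[x]/(u(x)))\cong G_j(A/(u(x)))$ for all $j$, and the Chinese Remainder Theorem then splits this into the statements for the individual prime factors $p_i(x)$, hence for each $k(\lambda_i)$. If you replace your single-prime localization by this one, the rest of your argument for (i) goes through, and (ii) and (iii) follow essentially as you describe.
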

	\begin{proof}
		$\rm (i)$	Consider the inclusions
		\begin{tikzcd}
			k \arrow[r, hook, "\beta"] & k[x] \arrow[r,hook, "\gamma"] &A.
		\end{tikzcd} 
		Note that, by \thref{split}, for every $j\geqslant 0$, $\beta$ induces an isomorphism 
		\begin{tikzcd}
			G_{j}(k) \arrow[r, "G_{j}(\beta)"', "\cong" ] & G_{j}(k[x])
		\end{tikzcd}
		and since $A^{[l]}=k^{[l+3]}$, for some $l\geqslant 0$, the inclusion $\gamma\beta$ also induces an isomorphism
		\begin{tikzcd}
			G_{j}(k) \arrow[r, "G_{j}(\gamma\beta)"', "\cong"] & G_{j}(A).
		\end{tikzcd}
		Therefore, $\gamma$ induces an isomorphism 
		$$\begin{tikzcd}
			G_{j}(k[x]) \arrow[r, "G_{j}(\gamma)"', "\cong"] &G_{j}(A),
		\end{tikzcd} \text{ for every }j \geqslant 0.$$
		Let $a(X)=\prod_{i=1}^{m}p_i(X)^{r_i}$ be a prime factorisation of $a(X)$ in $k[X]$ and $$u(x):= \prod_{1 \leqslant i \leqslant m}p_i(x).$$
		Note that $A[u(x)^{-1}]=k[x,u(x)^{-1},z,t]= k[x, u(x)^{-1}]^{[2]}$. 
		Therefore, by \thref{split}, the inclusion $k[x,u(x)^{-1}] \hookrightarrow A[u(x)^{-1}]$ induces an isomorphism $$G_{j}(k[x,u(x)^{-1}]) \xrightarrow{\cong} G_{j}(A[u(x)^{-1}]), \text{ for every } j \geqslant 0.$$ 
		Let
		$\overline{\gamma}: \frac{k[x]}{(u(x))} \rightarrow ~\frac{A}{(u(x))}$			
		be the canonical map induced by the inclusion
		$
		\gamma: k[x] \hookrightarrow A.
		$
		Now by \thref{fcom}, the flat morphism $\gamma:k[x] \hookrightarrow A$ induces the following commutative diagram for every $j \geqslant 1$:  
		$$
		\begin{tikzcd}[column sep=small]
			G_{j}(k[x]) \arrow{r} \arrow[d, "G_j(\gamma)"',"\cong"] & G_{j}(k[x,u(x)^{-1}])\arrow{r}\arrow[d,"\cong"] & G_{j-1}(\frac{k[x]}{(u(x))}) \arrow{r}\arrow{d} & G_{j-1}(k[x])\arrow{r}\arrow[d,"G_{j-1}(\gamma)"',"\cong"] & G_{j-1}(k[x,u(x)^{-1}]) \arrow[d,"\cong"] \\
			G_{j}(A) \arrow{r} & G_{j}(A[u(x)^{-1}])\arrow{r} & G_{j-1}(\frac{A}{(u(x))}) \arrow{r} & G_{j-1}(A) \arrow{r} & G_{j-1}(A[u(x)^{-1}]).
		\end{tikzcd}
		$$
		From the above diagram, applying the Five Lemma, we obtain that
		$\overline{\gamma}: \frac{k[x]}{(u(x))} \rightarrow ~\frac{A}{(u(x))}$,			
		induces an isomorphism of group
		\begin{equation}\label{4}
			G_{j}\left(\frac{k[x]}{(u(x))}\right)\xrightarrow[G_{j}(\overline{\gamma})]{\cong} G_{j}\left(\frac{A}{(u(x))}\right), \text{~for every~} j \geqslant 0.
		\end{equation}
		By the Chinese Remainder Theorem, $$\frac{k[x]}{(u(x))}= \prod_{i=1}^{ m}\frac{k[x]}{(p_i(x))}$$ and $$\frac{A}{(u(x))}=\prod_{i=1}^{ m} \frac{A}{(p_i(x))}=\prod_{i=1}^{ m}\frac{(k[x]/(p_i(x)))[Y,Z,T]}{(F(x,Z,T))}.
		$$ 
		Hence $\overline{\gamma}$ induces
		$\gamma_i: \frac{k[x]}{(p_i(x))} \rightarrow \frac{A}{(p_i(x))}$ for each $i,\, 1 \leqslant i \leqslant m$,
		$$
		G_j\left(\frac{k[x]}{(u(x))}\right)=\bigoplus_{1 \leqslant i \leqslant  m}G_j\left(\frac{k[x]}{(p_i(x))}\right)$$
		and 
		$$
		G_j\left(\frac{A}{(u(x))}\right)=\bigoplus_{1 \leqslant i \leqslant  m}G_j\left(\frac{A}{(p_i(x))}\right)=\bigoplus_{1 \leqslant i \leqslant m} G_j\left(\frac{(k[x]/(p_i(x)))[Y,Z,T]}{(F(x,Z,T))}\right),\text{ for } j=0,1.
		$$ 
		Moreover, for $j=0,1$,  $G_j(\overline{\gamma})=\prod_{i=1}^{m}G_j(\gamma_i)$, 
		where 
		\begin{equation}\label{5}
			G_j(\gamma_i) : G_{j}\left(\frac{k[x]}{(p_i(x))}\right) 
			\rightarrow
			G_{j}\left(\frac{A}{(p_i(x))}\right)=
			G_j\left(\frac{(k[x]/(p_i(x)))[Y,Z,T]}{(F(x,Z,T))}\right),
		\end{equation}
		is the canonical map induced by $\gamma_i$, for all $i,\mi$.
		Now by \eqref{4}, $G_j(\overline{\gamma})$ are isomorphisms, therefore, $G_j(\gamma_i)$'s are also isomorphisms for $j=0,1$ and for all $i,\,\mi$. 
		
		Let $\lambda_i$ be a root of $a(X)$, for some $i$, $1\leqslant i \leqslant n$.
		Then there exists $l\in \{1,\dots,m\}$ such that $p_{l}(\lambda_i)=0$.
		Therefore, $\frac{k[X]}{(p_{l}(X))} \cong k(\lambda_i)$. Hence from \eqref{5} and \thref{split}, we get that,
		$$
		G_{j}(k(\lambda_i))
		\xrightarrow[G_j{(\gamma_i)}]{\cong} G_j\left( \frac{k(\lambda_i)[Y,Z,T]}{(f_i)}\right) \cong G_j\left( \frac{k(\lambda_i)[Z,T]}{(f_i)}\right),
		\text{ for } j=0,1.$$
		Now, $G_1(k(\lambda_i))=k(\lambda_i)^{*}$ and the canonical isomorphism $G_1(\gamma_i): G_1(k(\lambda_i)) \rightarrow G_1 \left( \frac{k(\lambda_i)[Z,T]}{(f_i)} \right)$ maps $k(\lambda_i)^{*}$ to $\left(  \frac{k(\lambda_i)[Z,T]}{(f_i)}\right)^{*}$.
		Hence, 
		it follows that $\left( \frac{k(\lambda_i)[Z,T]}{(f_i)}\right)^{*}= k(\lambda_i)^{*}$.
		
		\medskip
		\noindent
		(ii)
		Let $i\in \{1,\dots,n\}$ be such that $\mu_i=1$.
		Set $\overline{A}=A \otimes_k \overline{k}$.
		Since ${A}^{[l]}= {k}^{[l+3]}$, for some $l\geqslant 0$, we have $\overline{A}^*=\overline{k}^*$ and $\overline{A}$ is a UFD. 
		Therefore, by \thref{ufd},  $f_i$ is irreducible in $\overline{k}[Z,T]$.
		Now, $\overline{A}$ is also a regular domain.
		Therefore,
		by \thref{reg}(i), $\frac{\overline{k}[Z,T]}{(f_i)}$ is a regular domain.
		Next by (i),
		$G_0\left(\frac{\overline{k}[Z,T]}{(f_i)}\right)= G_0(\overline{k})=\mathbb{Z}$.
		Hence, it follows that the ideal class group of $\frac{\overline{k}[Z,T]}{(f_i)}$, ${\rm Cl}\left(\frac{\overline{k}[Z,T]}{(f_i)}\right)=0$. Therefore, $\frac{\overline{k}[Z,T]}{(f_i)}$ is a PID. Again by (i), we have $\left(\frac{\overline{k}[Z,T]}{(f_i)}\right)^{*}=\overline{k}^{*}$.
		Hence by \thref{alg}, $\frac{\overline{k}[Z,T]}{(f_i)}=\overline{k}^{[1]}$.
		
		\medskip
		\noindent
		$\rm (iii)$
		We fix an $i\in\{ 1,\dots,n\}$ such that $\mu_i>1$. 
		Let $$A_i:= A \otimes_k k(\lambda_i)\cong\dfrac{k(\lambda_i)[X,Y,Z,T]}{(X^{\mu_i}\alpha_1(X)Y-F(X+\lambda_i,Z,T))}, \text{ where } \alpha_1(X)=\frac{a(X+\lambda_i)}{X^{\mu_i}},$$ 
		and $\alpha_1(0)\neq 0$.
		Now, $\ml(A_i)=k(\lambda_i)$ or $\dk(A_i)=A_i$ according as $\ml(A)=k$ or $\dk(A)=A$. 
		Therefore, by \thref{lin}, there exist $Z_{1},T_{1} \in k(\lambda_i)[Z,T]$ and $a_0,a_1\in k(\lambda_i){[Z_{1}]}$ such that $k(\lambda_i)[Z_{1},T_{1}]=k(\lambda_i)[Z,T]$ and $f_i=a_0(Z_{1})+a_1(Z_{1})T_{1}$.
		Since $A^{[l]}=k^{[l+3]}$, we have $A$ is a UFD. 
		By $\rm (i)$, $\left(\frac{k(\lambda_i)[Z,T]}{(f_i)}\right)^{*}=k(\lambda_i)^{*}$.
		Hence by \thref{corline},
		$k(\lambda_i)[Z,T]=k(\lambda_i)[f_i]^{[1]}$.
	\end{proof}
	
	When characteristic of $k$ is zero, the above result has the following consequence.	
	\begin{cor}\thlabel{r3}
		Let $k$ be a field of characteristic zero and $A^{[l]}=k^{[l+3]}$ for some $l \geqslant 0$. Then for every root $\lambda$ of $a(X)$, $k(\lambda)[Z,T]=k(\lambda)[F(\lambda,Z,T)]^{[1]}$ if either $\ml(A)=k$ or $\dk(A)=A$.
	\end{cor}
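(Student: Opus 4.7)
The plan is to reduce the statement to Theorem \ref{line} by splitting on whether the root $\lambda$ of $a(X)$ is simple or multiple, and then invoking the Abhyankar-Moh-Suzuki epimorphism theorem in the simple case.

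First, suppose $\lambda$ is a multiple root of $a(X)$, i.e., $\mu_i>1$ in the factorization $a(X)=\prod(X-\lambda_i)^{\mu_i}$ over $\overline{k}$. Then I would appeal directly to Theorem \ref{line}(iii): under either of the hypotheses $\ml(A)=k$ or $\dk(A)=A$, that theorem already yields $k(\lambda)[Z,T]=k(\lambda)[F(\lambda,Z,T)]^{[1]}$. Nothing further is required in this case, and notably the characteristic zero hypothesis is not used here.

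Next, suppose $\lambda$ is a simple root of $a(X)$, i.e., $\mu_i=1$. From Theorem \ref{line}(ii), I obtain
\[
\frac{\overline{k}[Z,T]}{(F(\lambda,Z,T))}=\overline{k}^{[1]}.
\]
Since $\mathrm{char}\,k=0$, the Abhyankar-Moh-Suzuki Epimorphism Theorem (\thref{ams}) applies over $\overline{k}$ to give $\overline{k}[Z,T]=\overline{k}[F(\lambda,Z,T)]^{[1]}$. Finally, as $\mathrm{char}\,k=0$, the extension $\overline{k}/k(\lambda)$ is separable, so \thref{sepco} descends the coordinate property from $\overline{k}$ to $k(\lambda)$, yielding $k(\lambda)[Z,T]=k(\lambda)[F(\lambda,Z,T)]^{[1]}$.

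Combining the two cases establishes the corollary. There is no serious obstacle here: the whole argument is a bookkeeping combination of \thref{line}(ii)--(iii), \thref{ams}, and \thref{sepco}. The only subtlety worth flagging is that the hypothesis $\ml(A)=k$ or $\dk(A)=A$ is genuinely needed only in the multiple-root case; for simple roots the conclusion follows purely from the stable polynomiality assumption $A^{[l]}=k^{[l+3]}$ together with characteristic zero.
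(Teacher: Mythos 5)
Your proof is correct, and it differs from the paper's mainly in its decomposition. The paper does not split into cases: it base-changes to $\overline{A}=A\otimes_k\overline{k}$ (noting $\overline{A}^{[l]}=\overline{k}^{[l+3]}$ and that $\ml(\overline{A})=\overline{k}$ or $\dk(\overline{A})=\overline{A}$), applies \thref{line} to $\overline{A}$ to conclude that $F(\lambda,Z,T)$ is a line in $\overline{k}[Z,T]$ for every root $\lambda$, and then applies \thref{ams} and \thref{sepco} uniformly to descend the coordinate property to $k(\lambda)$. You instead handle multiple roots by invoking \thref{line}(iii) over $k$ directly, which already yields the conclusion over $k(\lambda)$ with no appeal to Abhyankar--Moh--Suzuki, separable descent, or characteristic zero, and you reserve the chain \thref{line}(ii) followed by \thref{ams} and \thref{sepco} for simple roots --- which is exactly the paper's argument restricted to that case. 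What your version buys is a sharper accounting of the hypotheses (characteristic zero is needed only for simple roots, the $\ml$/$\dk$ hypothesis only for multiple roots); what the paper's version buys is brevity and uniformity, since a single pass over $\overline{k}$ treats all roots at once. Both arguments are complete and rest on the same ingredients.
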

	\begin{proof}
		Suppose $\ml(A)=k$ or $\dk(A)=A$.	
		Set 
		$\overline{A}=A \otimes_k \overline{k}$.
		Then $\overline{A}^{[l]}= \overline{k}^{[l+3]}$ with $\ml(\overline{A})=\overline{k}$ or $\dk(\overline{A})=\overline{A}$. 
		For a root $\lambda$ of $a(X)$, let $ f_{\lambda}= F(\lambda,Z,T)$.
		Then by \thref{line}(ii), 
		$\frac{\overline{k}[Z,T]}{(f_{\lambda})}=\overline{k}^{[1]}$, when $\lambda$ is a simple root and by \thref{line}(iii), $k(\lambda)[Z,T]=k(\lambda)[f_\lambda]^{[1]}$, when $\lambda$ is a multiple root. Now as
		ch.$k=0$, by \thref{ams} we have $\overline{k}[Z,T]=\overline{k}[f_{\lambda}]^{[1]}$, and by \thref{sepco}, $k(\lambda)[Z,T]=k(\lambda)[f_{\lambda}]^{[1]}$.
	\end{proof}
	
	Recall that $G= a(X)Y-F(X,Z,T)$ in \eqref{A}.
	The next result gives a sufficient condition for $\{X,G\}$ to be part of a system of coordinates in $k[X,Y,Z,T]$, over an algebraically closed field $k$.
	
	\begin{lem}\thlabel{G}
		Let $A$ and $G$ be as in \eqref{A}. Suppose that $a(X)= \prod_{1 \leqslant i \leqslant n} (X-\lambda_i)^{\mu_i}$, $\lambda_i \in k$, $\mu_i \geqslant 1$  and $f_i:=F(\lambda_i, Z,T)$, for every $i$. 
		Moreover, we also assume that, for every $i$, $k[Z,T]=k[f_i]^{[1]}$.
		Then $k[X,Y,Z,T]=k[X,G]^{[2]}=k[G]^{[3]}$.
	\end{lem}
	\begin{proof}
		By Theorems \ref{bcw} and \ref{qs}, it is enough to show that for every prime ideal $\p$ of $k[X,G]$, $k[X,Y,Z,T]_{\p} =k[X,G]_{\p}^{[2]}$. We now show that by considering two cases:
		
		\smallskip
		\noindent
		{\it Case} 1:
		Suppose $\p \in \Spec(k[X,G])$ be such that $a(X)\notin\p$. Then clearly
		$k[X,Y,Z,T]_{\p}~=~k[X,G]_{\p}^{[2]}$, since the image of $G$ in $k[X,Y,Z,T]_{\p}$, is almost monic and linear in $Y$.

		\smallskip
		\noindent
		{\it Case} 2:
		Suppose $\p \in \Spec(k[X,G])$ be such that $a(X)\in \p$.
		Then $\p \cap k[X]=(X-\lambda_i)k[X]$ for some $i$, $1 \leqslant i \leqslant n$.				
		Since $k[Z,T]=k[f_i]^{[1]}$, there exists $h_i \in k[Z,T]$ such that $k[Z,T]=k[f_i,h_i]$. 
		Let $p_i:=X-\lambda_i\in k[X]$, 
		$$
		R_i:=k[X]_{(X-\lambda_i)}, ~C_i:=R_i[G,h_i](={R_i}^{[2]}) \text{~~and~~} D_i:=R_i[Y,Z,T]=R_i[Y,f_i,h_i].
		$$ 
		Note that, $ \frac{C_i}{(p_i)}=\frac{R_i}{(p_i)}[f_i,h_i]=\left(\frac{R_i}{(p_i)}\right)^{[2]}$, $\frac{D_i}{(p_i)}=\left( \frac{C_i}{(p_i)}\right)^{[1]}$ and $D_i[p_i^{-1}]=R_i[p_i^{-1}][Y,Z,T]~=~C_i[p_i^{-1}]^{[1]}$. Therefore, by \thref{rs}, $D_i=C_i^{[1]}$,
		and hence $k[X,Y,Z,T]_{\p}=k[X,G]_{\p}^{[2]}$.
	\end{proof}
	
	The next result provides a sufficient condition for $A$ to be a polynomial ring over~$k[x]$.			
	\begin{lem}\thlabel{corG}
		Suppose that for every root $\lambda$ of $a(X)$ in $\overline{k}$, $k(\lambda)[Z,T] = k(\lambda)[F(\lambda,Z,T)]^{[1]}$ then  $A=k[x]^{[2]}$.	
	\end{lem}
	\begin{proof}
		Since for every root $\lambda$ of $a(X)$ in $\overline{k}$, $F(\lambda,Z,T)$ is a coordinate in $k(\lambda)[Z,T]$,  by \thref{fib}, we have, ${A}$ is an $\mathbb{A}^2$-fibration over ${k}[x]$
		and by \thref{G}, we have, $\overline{k}[X,Y,Z,T]=\overline{k}[X,G]^{[2]}$, which implies that $A\otimes_k \overline{k}=\overline{k}[x]^{[2]}$.
		Hence, by \thref{kx2}, $A=k[x]^{[2]}$.	
	\end{proof}
	
	The next result shows that, if $A$ is a polynomial ring over $k[x]$, then $G$ is a coordinate in $k[X,Y,Z,T]$ along with $X$.
	
	\begin{lem}\thlabel{k[x]}
		Let $A = k[x]^{[2]}$. Then $k[X,Y,Z,T] = k[X,G]^{[2]}$.	
	\end{lem}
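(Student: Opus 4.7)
My plan is to apply the Bass--Connell--Wright--Suslin theorem \thref{bcw} together with the Quillen--Suslin theorem \thref{qs}. Put $R := k[X,G]$ and $B := k[X,Y,Z,T]$. As $X$ and $G$ are algebraically independent over $k$, we have $R \cong k^{[2]}$, so by \thref{qs} every finitely generated projective $R$-module is free. Moreover $B$ is finitely presented over $R$ via $B \cong R[Y,Z,T]/(a(X)Y - F(X,Z,T) - G)$. Thus by \thref{bcw} it suffices to verify that $B_\mathfrak{m}$ is $R_\mathfrak{m}$-polynomial of rank two for every maximal ideal $\mathfrak{m}$ of $R$; this will force $B = \mathrm{Sym}_R(M)$ for a projective $R$-module $M$ of rank $2$, which is then free by \thref{qs}.

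The local analysis splits on whether $a(X) \in \mathfrak{m}$. If $a(X) \notin \mathfrak{m}$, then $a(X)$ is a unit in $R_\mathfrak{m}$ and the defining relation gives $Y = (G + F(X,Z,T))/a(X) \in R_\mathfrak{m}[Z,T]$, so $B_\mathfrak{m} = R_\mathfrak{m}[Z,T] = R_\mathfrak{m}^{[2]}$. Otherwise pick an irreducible factor $p(X)$ of $a(X)$ lying in $\mathfrak{m}$ and pass to the DVR $R_p := k[X]_{(p(X))}$ with uniformiser $\pi = p(X)$. Choose $u, v \in A$ with $A = k[x][u,v]$; the hypothesis yields $A \otimes_{k[x]} R_p = R_p^{[2]}$, placing us in the setup of the Bhatwadekar--Dutta theorem \thref{bd} applied to the hypersurface $R_p[Y,Z,T]/(a(X)Y - F(X,Z,T))$ over $R_p$. \thref{bd} produces an element $Z_0 \in R_p[Z,T]$ with $\overline{Z_0} \notin k(\lambda)$ and $k(X)[Z,T] = k(X)[Z_0]^{[1]}$, where $\lambda$ is the root of $p(X)$ in $\overline{k}$.

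To invoke the ``Moreover'' clause of \thref{bd} and conclude $R_p[Y,Z,T] = R_p[G]^{[2]}$, we must verify the dimensional condition $\dim k(\lambda)[\overline G, \overline{Z_0}] = 2$. Modulo $p(X)$, one has $\overline G = -F(\lambda,Z,T) = -f_\lambda$, and $f_\lambda$ is a non-constant line in $k(\lambda)[Z,T]$ by \thref{fib} (since $A$ is an $\mathbb{A}^2$-fibration over $k[x]$). Exploiting the polynomial-ring description $A/(p(x)A) = k(\lambda)[\bar u,\bar v]$, one may arrange $Z_0$ to project to a non-constant element of $k(\lambda)[Z,T]/(f_\lambda) \cong k(\lambda)^{[1]}$; this precludes algebraic dependence of $\overline{Z_0}$ on $f_\lambda$ over $k(\lambda)$, establishing the dimensional condition. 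Consequently $R_p[Y,Z,T] = R_p[G]^{[2]}$, and further localisation at $\mathfrak{m}$ yields $B_\mathfrak{m} = R_\mathfrak{m}^{[2]}$, completing the BCW check.

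The main obstacle is Case 2: extracting from the hypothesis $A = k[x]^{[2]}$ enough explicit coordinate data on the reduced fibre $k(\lambda)^{[2]}$ of $\mathrm{Spec}(A) \to \mathrm{Spec}(k[x])$ to verify the dimensional condition in \thref{bd} and lift the trivial $\mathbb{A}^2$-fibration $A \to k[x]$ to an $\mathbb{A}^2$-bundle $B \to R$. The strength of $A = k[x]^{[2]}$ beyond mere $\mathbb{A}^2$-fibredness is essential precisely at this step, since it is the polynomial presentation $A = k[x][u,v]$ (and not just the existence of $\mathbb{A}^2$ fibres) that lets us align $\overline{Z_0}$ with $\bar u$ and refute the algebraic dependence scenario.
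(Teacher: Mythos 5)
Your global strategy is the same as the paper's: reduce via \thref{bcw} and \thref{qs} to local triviality over $k[X,G]$, handle the primes not containing $a(X)$ directly, and over the DVR $R_p=k[X]_{(p(X))}$ use the hypothesis $A=k[x]^{[2]}$ (which gives $\frac{R_p[Y,Z,T]}{(G)}=R_p^{[2]}$) as input to \thref{bd}. The gap lies in your verification of the dimension hypothesis in the ``Moreover'' clause of \thref{bd}. You claim that ``one may arrange $Z_0$ to project to a non-constant element of $k(\lambda)[Z,T]/(f_\lambda)$'' by exploiting a presentation $A=k[x][u,v]$, but you give no mechanism for this: \thref{bd} only asserts the existence of some $Z_0\in R_p[Z,T]$ with $a\in R_p[Z_0]$, $\overline{Z_0}\notin k(\lambda)$ and $K[Z,T]=K[Z_0]^{[1]}$, and nothing in its statement allows you to modify that $Z_0$ while keeping these properties; nor do the fibre coordinates $\bar u,\bar v$, which live in $A/p(x)A\cong k(\lambda)[Y,Z,T]/(f_\lambda)$ rather than in $k(\lambda)[Z,T]$, transfer to a coordinate of $R_p[Z,T]$ of the kind \thref{bd} requires. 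As written, the dimensional condition is asserted, not proved, and it is exactly the point on which the whole Case 2 hinges.

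The step is in fact much simpler, and this is how the paper closes it: since $a=a(X)\in R_p$, the constraint $a\in R_p[Z_0]$ is vacuous, so both $Z$ and $T$ are legitimate choices for the element $Z_0$ in \thref{bd}; moreover $\overline{G}=-F(\lambda,Z,T)$ is nonzero and non-constant (because $\frac{k(\lambda)[Y,Z,T]}{(F(\lambda,Z,T))}=k(\lambda)^{[2]}$ forces $F(\lambda,Z,T)$ to be a nonzero non-unit), hence it cannot lie in both $k(\lambda)[Z]$ and $k(\lambda)[T]$, so $\dim k(\lambda)[\overline{G},Z]=2$ or $\dim k(\lambda)[\overline{G},T]=2$ and \thref{bd} yields $R_p[Y,Z,T]=R_p[G]^{[2]}$. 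No appeal to \thref{fib}, to $f_\lambda$ being a line, or to a chosen polynomial presentation $A=k[x][u,v]$ is needed at this point. Relatedly, your closing diagnosis is misplaced: the strength of $A=k[x]^{[2]}$ beyond mere $\mathbb{A}^2$-fibredness is consumed in producing the hypothesis $\frac{R_p[Y,Z,T]}{(G)}=R_p^{[2]}$ of \thref{bd} itself (an $\mathbb{A}^2$-fibration over $k[x]$ would only control the closed fibres), not in controlling $\overline{Z_0}$.
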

	\begin{proof}
		By Theorems \ref{bcw} and \ref{qs}, it is enough to show that for every 
		$\p \in \Spec(k[X])$, $k[X]_{\p}[Y,Z,T]=k[X]_{\p}[G]^{[2]}$.
		Let $(p(X)) \in \Spec(k[X])$ and $R :=k[x]_{(p(x))}$. Since $A~=~k[x]^{[2]}$ we have
		\begin{equation}\label{R21}
			\frac{R[Y,Z,T]}{(a(x)Y-F(x,Z,T))}=R^{[2]}.
		\end{equation}
		We consider two cases:
		
		\smallskip
		\noindent
		{\it Case} 1:
		Suppose $a(x) \notin p(x)k[x]$.
		Then $R[Y,Z,T]=R[G]^{[2]}$.
		
		\smallskip
		\noindent
		{\it Case} 2:
		Suppose that $a(x) \in p(x)k[x]$, i.e., $p(X)\mid a(X)$ in $k[X]$.
		
		\noindent
		Let $\lambda$ be a root of $p(X)$ in $\overline{k}$. 
		Then from \eqref{R21}, it follows that, $\frac{k(\lambda)[Y,Z,T]}{(F(\lambda,Z,T))}=k(\lambda)^{[2]}$.
		Therefore, $F(\lambda,Z,T) \notin k(\lambda)^*$, and hence either $\dim k(\lambda)[Z, F(\lambda, Z,T)]=2$ or \\$\dim k(\lambda)[T, F(\lambda, Z,T)]=2$. Therefore, by \thref{bd}, $R[Y,Z,T]=R[G]^{[2]}$.
	\end{proof}

	Now we prove an extended version of Theorem~A.
	
	\begin{thm}\thlabel{chp2}
		Let $A$ be an affine domain over $k$ as in \eqref{A} and $G= a(X)Y-F(X,Z,T)$ with $\deg_Xa(X)\geqslant 1$. Suppose that
		$a(X)$ has no simple root in $\overline{k}$ i.e. $$a(X)=\prod\limits_{i=1}^{n}(X-\lambda_i)^{\mu_i} \in \overline{k}[X], \text{ for some integer } n\geqslant 1 \text{ and } \mu_i>1,\, 1\leqslant i\leqslant n.$$
		Then the following statements are equivalent: 
		\begin{enumerate}
			\item [\rm(i)] $k[X,Y,Z,T]=k[X,G]^{[2]}$.
			
			\item[\rm(ii)]  $k[X,Y,Z,T]=k[G]^{[3]}$.
			
			\item[\rm(iii)] $A=k[x]^{[2]}$.
			
			\item[\rm(iv)] $A=k^{[3]}$.
			
			\item[\rm(v)] 
			For every root $\lambda$ of $a(X)$, $k(\lambda)[Z,T]=k(\lambda)[F(\lambda,Z,T)]^{[1]}$.
			
			\item[\rm(vi)]  
			For every root $\lambda$ of $a(X)$, $F(\lambda,Z,T)$ is a line in $k(\lambda)[Z,T]$ and $\ml(A)=k$.
			
			\item[\rm(vii)] 	$A^{[l]}=k^{[l+3]}$ for some $l \geqslant 0$ and $\ml(A)=k$.
			
			\item [\rm(viii)] $A$ is an $\mathbb{A}^{2}$-fibration over $k[x]$ and $\ml(A)=k$.	
			
			\item[\rm (ix)]
			$A$ is a UFD, $\ml(A)=k$ and for every prime factor $p(X)$ of $a(X)$ in $k[X]$, $\left( \frac{A}{p(x)A}  \right)^{*}=\left( \frac{k[x]}{(p(x))}\right)^{*}$.
			
			\item[\rm(x)]   
			For every root $\lambda$ of $a(X)$, $F(\lambda,Z,T)$ is a line in $k(\lambda)[Z,T]$ and $\dk(A) = A$.

			\item[\rm(xi)] 	$A^{[l]}=k^{[l+3]}$ for some $l \geqslant 0$ and $\dk(A)=A$.
			
			\item [\rm(xii)] $A$ is an $\mathbb{A}^{2}$-fibration over $k[x]$ and $\dk(A)=A$.
			
			\item[\rm (xiii)] 
			$A$ is a UFD, $\dk(A)=A$ and for every prime factor $p(X)$ of $a(X)$ in $k[X]$, $\left( \frac{A}{p(x)A}  \right)^{*}=\left( \frac{k[x]}{(p(x))}\right)^{*}$.
			
		\end{enumerate}
	\end{thm}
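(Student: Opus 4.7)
The plan is to run the same circular pattern as in \thref{ch0}, but to replace the characteristic-zero input (\thref{ams}) by \thref{line}(iii), whose hypothesis matches our standing assumption (b) that every root of $a(X)$ in $\overline{k}$ is multiple. The backbone is the loop
\[
{\rm(i)}\Rightarrow{\rm(ii)}\Rightarrow{\rm(iv)}\Rightarrow{\rm(vii)}\Rightarrow{\rm(v)}\Rightarrow{\rm(iii)}\Rightarrow{\rm(i)},
\]
in which $\rm(i)\Rightarrow(ii)\Rightarrow(iv)$ are formal, $\rm(iv)\Rightarrow(vii)$ is immediate upon taking $l=0$ and invoking Lemma \ref{poly}, the step $\rm(vii)\Rightarrow(v)$ follows from \thref{line}(iii)(a) applied to each (necessarily multiple) root $\lambda_{i}$, $\rm(v)\Rightarrow(iii)$ is \thref{corG}, and $\rm(iii)\Rightarrow(i)$ is \thref{k[x]}.

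Next I integrate the Makar--Limanov branch: $\rm(vi)\Leftrightarrow(viii)$ via \thref{fib}, while $\rm(iv)\Rightarrow(vi)$ follows from the backbone (using Lemma \ref{poly} to supply $\ml(A)=k$). The essential step is $\rm(vi)\Rightarrow(v)$: for a root $\lambda\in\overline{k}$ set $L=k(\lambda)$, and after the translation $X\mapsto X-\lambda$ the $L$-algebra $A\otimes_{k}L$ acquires the shape required by \thref{lin} with $d=\mu\geqslant 2$; given $\ml(A\otimes_{k}L)=L$, the ``moreover'' clause of \thref{lin} applied to the line $F(\lambda,Z,T)$ yields $k(\lambda)[Z,T]=k(\lambda)[F(\lambda,Z,T)]^{[1]}$. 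The equivalence with (ix) is obtained by combining \thref{ufd} (which rules out reducibility of $F(\lambda,Z,T)$ in $k(\lambda)[Z,T]$), the hypothesis $(A/p(x)A)^{*}=(k[x]/(p(x)))^{*}$ (which, via $A/p(x)A\cong(k(\lambda)[Z,T]/(F(\lambda,Z,T)))[Y]$, translates to $(k(\lambda)[Z,T]/(F(\lambda,Z,T)))^{*}=k(\lambda)^{*}$), and \thref{linear} applied to the form $F(\lambda,Z,T)=a_{0}(Z_{1})+a_{1}(Z_{1})T_{1}$ furnished by \thref{lin}. The Derksen branch $\rm(x),(xi),(xii),(xiii)$ is handled symmetrically, with \thref{lin2} replacing \thref{lin} and \thref{line}(iii)(b) replacing (iii)(a); the hypothesis of \thref{lin2} is satisfied because after translation the element $y$ lies in $\dk(A\otimes_{k}L)=A\otimes_{k}L$ and has positive value under the shifted $w$-filtration.

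The main technical obstacle common to both branches is the base-change statement that $\ml(A)=k$ implies $\ml(A\otimes_{k}L)=L$, and $\dk(A)=A$ implies $\dk(A\otimes_{k}L)=A\otimes_{k}L$; this is not automatic since $L/k$ may well be inseparable. The key observation is that any non-trivial exponential map $\phi$ on $A$ extends $L$-linearly to a non-trivial exponential map $\phi_{L}$ on $A\otimes_{k}L$ whose invariant ring is exactly $A^{\phi}\otimes_{k}L$, as one verifies by writing elements in a $k$-basis of $L$ and matching coefficients. Intersecting over all $\phi$ and invoking $\bigcap_{\phi}A^{\phi}=k$ forces the coefficients of any would-be ML-invariant into $k$, hence the element itself into $L$; the Derksen statement follows immediately from the inclusion $\dk(A)\otimes_{k}L\subseteq\dk(A\otimes_{k}L)$. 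Once this base-change principle is in place, the remaining bookkeeping (running over the distinct roots $\lambda$, performing the translations, and closing the UFD equivalences for (ix) and (xiii)) is routine.
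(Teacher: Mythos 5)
Your proposal is correct and follows essentially the same route as the paper: the same cyclic scheme $(\mathrm{i})\Rightarrow(\mathrm{ii})\Rightarrow(\mathrm{iv})\Rightarrow(\mathrm{vii})\Rightarrow\cdots\Rightarrow(\mathrm{v})\Rightarrow(\mathrm{iii})\Rightarrow(\mathrm{i})$ built on \thref{line}(iii), Propositions \ref{lin} and \ref{lin2}, \thref{fib}, \thref{ufd}, \thref{corG}, \thref{corline} and \thref{k[x]}, with only minor re-routing (for instance you go $(\mathrm{vii})\Rightarrow(\mathrm{v})$ directly and deduce $(\mathrm{vi})\Rightarrow(\mathrm{v})$ from the ``line'' clause of \thref{lin}, whereas the paper threads $(\mathrm{vii})\Rightarrow(\mathrm{vi})\Leftrightarrow(\mathrm{viii})\Rightarrow(\mathrm{ix})\Rightarrow(\mathrm{v})$ via \thref{corline}). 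Your explicit basis-coefficient argument that $\ml(A)=k$ and $\dk(A)=A$ descend to $A\otimes_k k(\lambda)$ supplies a detail the paper invokes without proof, and it is sound even when $k(\lambda)/k$ is inseparable.
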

	\begin{proof}
		We are going to prove the above equivalence of statements in the following sequence:
		$$
		\begin{tikzcd}[column sep=small]
			{\rm(i)} \arrow[r,Rightarrow] & {\rm(ii)} \arrow[r, Rightarrow]& {\rm(iv)}\arrow[r, Rightarrow]\arrow[rd, Rightarrow] & {\rm (vii)}\arrow[r, Rightarrow]&{\rm(vi)}\arrow[r, Leftrightarrow]&{\rm(viii)}\arrow[r,Rightarrow]&{\rm(ix)}\arrow[r, Rightarrow]& {\rm(v)}\arrow[r, Rightarrow] &{\rm(iii)}\arrow[r, Rightarrow]& {\rm(i)} \\
			& & & {\rm(xi)}\arrow[r, Rightarrow]& {\rm(x)}\arrow[r, Leftrightarrow]&{\rm(xii)}\arrow[r, Rightarrow]&{\rm(xiii)}\arrow[ur, Rightarrow]
		\end{tikzcd}
		$$
		
		Note that, $\rm (vii) \Rightarrow (vi)$ and 	$\rm (xi) \Rightarrow (x)$ both
		follow from \thref{line}(iii) and that
		$\rm (vi) \Leftrightarrow (viii)$ and $\rm (x)\Leftrightarrow (xii)$ follow from $\rm(i) \Leftrightarrow \rm(iii)$ of \thref{fib}.
		
		\smallskip
		\noindent
		$\rm (viii) \Rightarrow (ix)$ and
		$\rm (xii)\Rightarrow (xiii)$: 
		Let $p(X)$ be a prime factor of $a(X)$ in $k[X]$. Then there exists a root $\lambda_j$ of $a(X)$ in $\overline{k}$, for some $j\in \{1,\dots,n\}$, such that $p(\lambda_j)=0$. 
		Therefore, $\frac{k[X]}{(p(X))}\cong k(\lambda_j)$.
		Since $A$ is an $\mathbb{A}^2$-fibration over $k[x]$, by $\rm(i) \Rightarrow \rm(iii)$ of \thref{fib}, we know that, for every root $\lambda_i$ of $a(X)$ in $\overline{k}$, $\frac{k(\lambda_i)[Z,T]}{(F(\lambda_i,Z,T))}=k(\lambda_i)^{[1]}$. Hence by \thref{ufd}, $A$ is a UFD and
		\begin{equation*}
			\dfrac{A}{p(x)A}\cong \dfrac{k(\lambda_j)[Y,Z,T]}{(F(\lambda_j,Z,T))}= \left(\dfrac{k(\lambda_j)[Z,T]}{(F(\lambda_j,Z,T))}\right)^{[1]}= k(\lambda_j)^{[2]}.
		\end{equation*}
		Thus $\left(\frac{A}{p(x)A}\right)^{*} = k(\lambda_j)^{*}=\left(\frac{k[X]}{(p(X))}\right)^{*}.$ 	
		
		\smallskip
		\noindent
		$\rm (ix) \Rightarrow (v)$ and $\rm (xiii) \Rightarrow (v)$:
		Let us fix a root $\lambda_i$ of $a(X)$ for some $i\in \{ 1,\dots,n\}$. 
		Now, $\ml(A \otimes_k k(\lambda_i))=k(\lambda_i)$ or $\dk(A \otimes_k k(\lambda_i))= A \otimes_k k(\lambda_i)$ according as $\ml(A)=k$ or $\dk(A)=A$.
		Since $\mu_i>1$, and 
		$$A \otimes_k k(\lambda_i)\cong\dfrac{k(\lambda_i)[X,Y,Z,T]}{(X^{\mu_i}\alpha_1(X)Y-F(X+\lambda_i,Z,T))}, \text{ where } \alpha_1(X)=\frac{a(X+\lambda_i)}{X^{\mu_i}},$$
		by \thref{lin}, without loss of generality, we can assume that $$F(\lambda_i,Z,T)=a_0(Z)+a_1(Z)T \text{ for some } a_0,a_1 \in k(\lambda_i)^{[1]}.$$
		Note that, 
		there exists a prime factor $p(X)$ of $a(X)$ in $k[X]$ such that $p(\lambda_i)=0$, and hence $\frac{k[X]}{(p(X))}\cong k(\lambda_i).$
		Hence by \thref{corline},
		$k(\lambda_i)[Z,T] = k(\lambda_i)[F(\lambda_i,Z,T)]^{[1]}$. 
		
		Now, $\rm (v) \Rightarrow (iii)$ and $\rm (iii) \Rightarrow (i)$ follow from Lemmas \ref{corG} and \ref{k[x]} respectively and the rest of the equivalences follow trivially.
	\end{proof}
	
	The next result connects the above theorem with the Zariski Cancellation Problem~(ZCP). 
	
	\begin{cor}\thlabel{czcp}
		Let $k$ be a field of positive characteristic and $A$ be an affine domain as in \eqref{A} over $k$ such that $a(X)$ has only multiple roots in $\overline{k}$
		i.e. $$a(X)=\prod\limits_{i=1}^{n}(X-\lambda_i)^{\mu_i} \in \overline{k}[X], \text{ for some integer } n\geqslant 1 \text{ and } \mu_i>1,\, 1\leqslant i\leqslant n$$ and
		the following conditions are satisfied:
		\begin{enumerate}[\rm(a)]
			\item $\frac{k(\lambda_i)[Z,T]}{(F(\lambda_i,Z,T))}=k(\lambda_i)^{[1]}$, for all $i\in \{1,\dots,n\}$	
			\item $k(\lambda_1)[Z,T]\neq k(\lambda_1)[F(\lambda_1,Z,T)]^{[1]}$.
		\end{enumerate} 
		Then $A$ is a counterexample to the ZCP and to the $\mathbb{A}^2$-fibration problem over $k^{[1]}$ in positive characteristic.
	\end{cor}
	\begin{proof}
		Let $a(X)=\prod_{i=1}^{m}p_i(X)^{r_i}$ be a prime factorisation of $a(X)$ in $k[X]$.
		Since $\frac{k(\lambda_i)[Z,T]}{(F(\lambda_i,Z,T))}=k(\lambda_i)^{[1]}$, for all $i\in \{1,\dots,n\}$, we have $\frac{k[X,Z,T]}{(p_j(X),F(X,Z,T))}=\left(\frac{k[X]}{(p_j(X))}\right)^{[1]}$, for all $j\in \{1,\dots,m\}$.
		Therefore, by Chinese Remainder Theorem,  $$\frac{k[X,Z,T]}{(\prod_{i=1}^{m} p_i(X),F(X,Z,T))}=\left(\frac{k[X]}{(\prod_{i=1}^{m} p_i(X))}\right)^{[1]}.$$
		Hence, by \thref{p1}, $A^{[1]}= k^{[4]}$ and since
		$k(\lambda_1)[Z,T]\neq k(\lambda_1)[F(\lambda_1,Z,T)]^{[1]}$, by \thref{chp2}($\rm (v) \Leftrightarrow (iv)$), $A\neq k^{[3]}$. Thus, $A$ is a counterexample to the ZCP. 
		
		By condition (b) and \thref{chp2}($\rm (v) \Leftrightarrow (iii)$), $A\not\cong k[x]^{[2]}$ but by condition (a) and \thref{fib} ($\rm (i) \Leftrightarrow (iii)$),
		$A$ is an $\A^2$-fibration over $k[x]$. Thus $A$ is a non-trivial 
		$\A^2$-fibration over $k[x]$.
	\end{proof}
	
	Next we prove an extended version of Theorem~B.
	
	\begin{thm}\thlabel{ch0}
		Let $k$ be a field of characteristic zero. Then the following statements are equivalent:
		
		\begin{enumerate}
			\item [\rm(i)] $k[X,Y,Z,T]=k[X,G]^{[2]}$.
			
			\item[\rm(ii)]  $k[X,Y,Z,T]=k[G]^{[3]}$.
			
			\item[\rm(iii)] $A=k[x]^{[2]}$.
			
			\item[\rm(iv)] $A=k^{[3]}$.

			\item[\rm(v)] For every root $\lambda$ of $a(X)$, $k(\lambda)[Z,T]=k(\lambda)[F(\lambda,Z,T)]^{[1]}$.

			\item [\rm(vi)] $A$ is an $\mathbb{A}^{2}$-fibration over $k[x]$. 

			\item[\rm(vii)] 	$A^{[l]}=k^{[l+3]}$ for some $l \geqslant 0$ and $\ml(A)=k$.

			\item[\rm(viii)] $A^{[l]}=k^{[l+3]}$ for some $l \geqslant 0$ and $\dk(A)=A$.	
		\end{enumerate} 
	\end{thm}
	\begin{proof}
		We are going to prove the above equivalence of statements in the following sequence:
		$$
		\begin{tikzcd}[column sep=small]
			{\rm(i)} \arrow[r, Rightarrow] & {\rm(ii)} \arrow[r, Rightarrow]& {\rm(iv)}\arrow[r, Rightarrow]\arrow[rd, Rightarrow] & {\rm (vii)}\arrow[r, Rightarrow]& {\rm(v)}\arrow[r, Rightarrow] &{\rm(iii)}\arrow[r, Rightarrow]\arrow[d, Leftrightarrow]& {\rm(i)} \\
			& & & {\rm(viii)}\arrow[ur, Rightarrow] & &{\rm(vi)}
		\end{tikzcd}
		$$
		
		Note that $\rm(vii)\Rightarrow \rm(v)$ and $\rm(viii)\Rightarrow\rm(v)$ both follow from \thref{r3}.
		Now $\rm (v) \Rightarrow (iii)$ and $\rm (iii) \Rightarrow (i)$ follow from \thref{corG} and \thref{k[x]} respectively. 
		$\rm(vi)\Rightarrow\rm(iii)$ follows from a theorem of Sathaye (\cite{sp2}), establishing that any $\mathbb{A}^2$-fibration over a PID $R$, containing $\mathbb{Q}$, must be isomorphic to $R^{[2]}$.	
		Other equivalences follow trivially. 
	\end{proof}	
	
	We now present a few examples to illustrate the necessity of various hypotheses in Theorem~A (\thref{chp2}).
	The first example shows that in \thref{chp2}, the statements $\rm  (ix)$ and  $\rm (xiii)$ do not imply the other statements when $a(X)$ has a simple root. 	
	
	\begin{ex}{\em Consider
			\begin{equation*}
				A= \dfrac{k[X,Y,Z,T]}{(XY - (Z^2 + T^3 +1))},
			\end{equation*}
			where $a(X) = X$ and $f(Z,T)= Z^2 + T^3 +1$. 
			Note that, $\frac{\overline k[Z,T]}{(Z^2 + T^3 +1)}\neq \overline k^{[1]}$.
			Let $x$ be the image of $X$ in $A$. }
	\end{ex}
	
	\noindent
	Then $A$ has the following properties.
	\begin{enumerate}
		
		\item[\rm(i)] $\ml(A) =k$, $\dk(A) = A$ (cf. \cite[Remark 4.7(2)]{com}). 
		
		\item[\rm(ii)]  $\left(  \frac{A}{xA}\right)^*= k^{*}$.
		
		\item[\rm(iii)] $A$ is a UFD (cf. \thref{ufd}).
	\end{enumerate}
	However,
	\begin{enumerate}
		\item[\rm(I)]   Since $\frac{k[Z,T]}{(f(Z,T))}\neq k^{[1]}$, $\frac{A}{xA}\neq k^{[2]}$. Hence $A$ is not an $\mathbb{A}^2$-fibration over $k[x]$. 
		\item [\rm(II)] $A^{[l]}\not\cong k^{[l+3]}$ for every $l\geqslant 0$ (cf. \thref{line}(ii)).
	\end{enumerate}
	
	The next example shows that in statement $\rm(v)$ of \thref{chp2}, the hypothesis ``for every root of $a(X)$" cannot be relaxed to ``for all but at most one root"; not even in the special case when $F\in k[Z,T]$.				
	\begin{ex}
		{\em Let $k$ be a non-perfect field of positive characteristic $p>0$. Therefore, there exists $\lambda\in k$ such that $\beta^p = \lambda$ in $\overline{k}$ with $\beta\in \overline{k}\setminus k$. Let
			\begin{center}
				$A = k[X,Y,Z,T]/(X^p(X^p - \lambda)Y - (Z^p + \lambda T^p + T))$
			\end{center}
			with $a(X) = X^p(X^p -\lambda)$, $f(Z,T) = Z^p +\lambda T^p + T\in k[Z,T]$ and $G = a(X)Y -f(Z,T)$.
			By \cite[Remark 4.5]{asa2},  $f(Z,T)$ is a non-trivial $\mathbb{A}^1$-form i.e.,
			$\frac{k[Z,T]}{(f(Z,T))}\neq k^{[1]}$ and
			$\frac{k(\beta)[Z,T]}{(f(Z,T))} = k(\beta)^{[1]}$.
			Hence $\left(\frac{k[Z,T]}{(f(Z,T))}\right)^{*} = k^{*}$.
			Note that $k(\beta)[Z,T] = k(\beta)[f, Z+\beta T]$
			and $k(\beta)[X,Y,Z,T] = k(\beta)[X,G,Y_1,Z_1]$, where $Y_1:= Y + T^p$ and $Z_1:= Z+X^2T -\beta XT +\beta T$. 
		}
	\end{ex}
	\noindent
	Then $A$ has the following properties:
	\begin{enumerate}
		\item[\rm(i)] $A$ is a UFD (cf. \thref{ufd}).
		\item [\rm(ii)] Since $\left(\frac{k[Z,T]}{(f(Z,T))}\right)^{*}=k^{*}$ and $\frac{k[Z,T]}{(f(Z,T))}\neq k^{[1]}$, by \thref{linear} it follows that $f(Z,T)$ is not linear with respect to any system of coordinates in $k[Z,T]$. Therefore, by \thref{lin}, we have $\ml(A)\neq k$ and $\dk(A)\neq A$ respectively.
		Hence $A\neq k^{[3]}$ (cf. \thref{poly}).
	\end{enumerate}	
	
	\bigskip
	
	\noindent
	{\bf Acknowledgements.}
	The authors thank Professor Amartya K. Dutta for going through the draft and giving valuable suggestions.
	The second author acknowledges Department of Science and Technology (DST), India for their INDO-RUSS project (DST/INT/RUS/RSF/P-48/2021).

\end{document}